\documentclass[leqno,a4paper]{amsart}
\usepackage{amsfonts,amssymb,graphicx,hyperref}
\usepackage{soul}
\usepackage{xcolor}
\hypersetup{colorlinks=true,linkcolor=red!50!black}
\hypersetup{anchorcolor=green,citecolor=blue!80!black,urlcolor=red!50!black,filecolor=magenta,pdftoolbar=true}
\usepackage[russian,english]{babel}

\allowdisplaybreaks

\makeatletter 
\pdfstringdefDisableCommands{\let\HyPsd@CatcodeWarning\@gobble}
\makeatother

\usepackage{breqn}
\setkeys{breqn}{compact}

\makeatletter
\def\big{\bBigg@{1.25}}
\makeatother
\usepackage[utf8]{inputenc}
\usepackage[T1]{fontenc}
\usepackage[nobreak]{cite}
\usepackage{mathtools,enumitem}
\usepackage[capitalize,nameinlink]{cleveref}
\Crefname{enumi}{}{}
\Crefname{subsection}{Subsection}{Subsections}

\newtheorem{theorem}{Theorem}[section]
\newtheorem{lemma}[theorem]{Lemma}

\newtheorem{corollary}[theorem]{Corollary}
\newtheorem{problem}[theorem]{Problem}
\theoremstyle{definition}

\newtheorem{example}[theorem]{Example}
\theoremstyle{remark}
\newtheorem{remark}[theorem]{Remark}

\crefname{equation}{}{}
\numberwithin{equation}{section}

\DeclarePairedDelimiter{\absof}{\lvert}{\rvert} 
\DeclarePairedDelimiter{\paren}{\lparen}{\rparen} 
\DeclarePairedDelimiter{\setof}{\lbrace}{\rbrace}

\DeclareMathOperator{\set}{set}

\topmargin -1cm
\textheight 23cm
\oddsidemargin -0cm
\evensidemargin -0cm
\textwidth 16cm

\newcommand{\abs}[1]{\left|#1\right|}

\newcommand{\Z}{\mathbb Z}

\DeclareMathOperator{\stab}{\mathbf S}

\newcommand{\orb}{\mathbf O}

\DeclareMathOperator{\ext}{ext} 
\DeclareMathOperator{\chu}{c-hull} 

\newcommand{\chS}{\chu_S} 
\newcommand{\chR}{\chu_R}

\usepackage{lipsum}
\makeatletter
\newcommand{\authorfootnotes}{\renewcommand\thefootnote{\@fnsymbol\c@footnote}}
\g@addto@macro{\endabstract}{\@setabstract}
\makeatother

\begin{document}
	
\begin{center}
\Large
{\bf\MakeUppercase{Betweenness isomorphism classes of circles with finitely many points inside}}\par \bigskip\bigskip

\normalsize
\authorfootnotes

Martin Dole\v{z}al\footnote{Research supported by the GA\v{C}R projekt EXPRO 20-31529X and by the Czech Academy of Sciences (RVO 67985840).}\textsuperscript{1}, 
Jan Kol\'{a}\v{r}\footnote{Research supported by the Czech Academy of Sciences (RVO 67985840).}\textsuperscript{1}, and
Janusz Morawiec\footnote{Research supported by the University of Silesia Mathematics Department (Iterative Functional Equations and Real Analysis program).}\textsuperscript{2} 
\par \bigskip

\textsuperscript{1}Institute of Mathematics, Czech Academy of Sciences, \v{Z}itn\'{a} 25, 115 67 Praha 1, Czech Republic,\par email addresses: dolezal@math.cas.cz,\, kolar@math.cas.cz \par 
\textsuperscript{2}Institute of Mathematics, University of Silesia, Bankowa 14, PL-40-007 Katowice, Poland,\par email address: janusz.morawiec@us.edu.pl \par \bigskip	

\today
\end{center}
\medskip	
	 
\begin{abstract}
We give a necessary and sufficient condition for two circles, each with finitely many points added inside, to be betweenness isomorphic. 
We fully characterize the betweenness isomorphism classes in the family consisting of all circles with three collinear points inside.
\end{abstract}
\bigskip 

\noindent MSC (2020): 52C45, 03E20, 51M04, 14L30.\\
Keywords: betweenness isomorphism classes, circles with finitely many points inside, group actions
\bigskip\bigskip	
	
\tableofcontents

\section{Introduction}
Many interesting, natural and important mathematical concepts can be defined in the abstract setting. One of them is a ternary relation called \emph{betweenness}, which is widely studied in connection with broadly understood geometry (see, e.g., \cite{Soltan1984, AdelekeNeumann1998, Pambuccian2011} and the references therein). 
The study of this topic, including axiomatization, goes back to \cite{Pasch1882, HuntingtonKline1917, Huntington1924}, where it was introduced for the purposes of plane geometry. In the next years, betweenness was considered not only as a purely theoretical concept of the plane geometry, but it was also studied in different contexts with correlations to mathematical objects such as algebra (see, e.g., \cite{Hedlikova1983, ChajdaKolarikLanger2013, JostWenzel2023}), convex structures (see, e.g., \cite{Vel1993, Kubis2002, Chvatal2009, AndersonBankstonMcCluskey2021}), graphs (see, e.g., \cite{MorganaMulder2002,ChangatNarasimha-ShenoiSeethakuttyamma2019, Courcelle2020, Courcelle2021}), lattices (see, e.g., \cite{SmileyTransue1943, DuvelmeyerWenzel2004}), metric and normed spaces (see, e.g., \cite{Toranzos1971, DiminnieWhite1981, Simovici2009, BankstonMcCluskey2023}), ordered sets (see, e.g., \cite{Sholander1952, Fishburn1971, ZhangPerez-FernandezBeats2019,Lihova2000}),  topological structures (see, e.g., \cite{Bankston2013, Bankston2015, Shakir2023}), and many others (see, e.g., \cite{ChvatalWu2012, BrunoMcCluskeySzeptycki2017, Shi2022, ZhaoZhaoAhang2023}).

\emph{Linear betweenness}
(induced by a fixed linear ordering on a given line)
has three essential generalizations: \emph{algebraic betweenness} (applied to vector 
spaces), \emph{metric betweenness} (considered on semimetric spaces), and 
\emph{lattice betweenness} (studied on lattice objects). A comprehensive comparison 
of these three relations can be found in \cite{Smiley1943}. In this paper we are only
interested in algebraic betweenness on certain subsets of the Euclidean plane 
(called also \emph{Euclidean betweenness}) defined as follows: Given a subset $A$ of 
Euclidean space, we say that a point $x\in A$ is \emph{between} points $a\in A$ and 
$b\in A$ (denoted by $B_A(a,x,b)$ in the language of the betweenness relation) if and only if $x=(1-\lambda)a+\lambda b$ for some $\lambda\in[0,1]$. Denoting by $[a,b]$ 
the (closed) linear segment connecting $a$ and $b$, i.e., 
$[a,b]=\{\lambda a+(1-\lambda)b:\lambda\in[0,1]\}$, we can equivalently say that 
a point $x\in A$ is between $a\in A$ and $b\in A$ if and only if $x\in[a,b]$. 
Euclidean betweenness can be easily generalized to vector spaces over arbitrary 
ordered fields; however, we are not going to explore this direction.

Given two betweenness structures $(X,B_X)$ and $(Y,B_Y)$, a map $f\colon X\to Y$ is said to be:
\begin{enumerate}
\item[-] \emph{betweenness preserving} if 
\begin{equation*}
B_X(a,x,b) \implies B_Y(f(a),f(x),f(b))
\end{equation*}
for all $a,x,b \in X$ (see e.g. \cite{HouMcColm2008});
\item[-] \emph{betweenness isomorphism} if it is a bijection and 
\begin{equation*}
B_X(a,x,b) \iff B_Y(f(a),f(x),f(b))
\end{equation*}	
for all $a,x,b \in X$ (cf. \cite{Hedlikova1981} for the lattice case). 
\end{enumerate}
In the case of Euclidean betweenness, we will also use the term 
\emph{betweenness homomorphism} instead of betweenness preserving 
map. For example, if $S,R\subset\mathbb R^2$, we call a map 
$f\colon S\to R$ betweenness homomorphism if, for all $a,b,c\in S$ 
such that $c\in[a,b]$, it holds $f(c)\in[f(a),f(b)]$. 
Denote by $(a,b)$ the open segment, i.e., 
$(a,b):=[a,b]\setminus\{a,b\}$ and note that for any bijection 
$f\colon S\to R$ the condition $c\in[a,b]\iff f(c)\in[f(a),f(b)]$ 
is equivalent to the condition $c\in(a,b)\iff f(c)\in(f(a),f(b))$. 
Therefore, throughout this paper, a \emph{betweenness isomorphism
of $S$ and $R$} is a bijective map $f\colon S\to R$ such that for 
every $a,b,c\in S$, it holds $c\in(a,b)\iff f(c)\in(f(a),f(b))$.

We say that sets $S,R\subset\mathbb R^2$ are \emph{betweenness isomorphic} if there exists a betweenness isomorphism of $S$ and $R$.
Note that every betweenness preserving map that is a bijection between 
linearly ordered sets is automatically a betweenness isomorphism, but
this is not the case for Euclidean betweenness; indeed, each bijection $f\colon K\to [0,1)\times\{0\}\subset\mathbb R^2$, where 
$K\subset\mathbb R^2$ is the unit circle centred at the origin, 
is a betweenness homomorphism; however, its inverse $f^{-1}$ 
is very far from being a betweenness homomorphism. 

Among subsets of the plane, one can easily find examples of those 
$A\subset\mathbb R^2$ for which the Euclidean betweenness is 
\emph{discrete}, i.e., $B_A(a,x,b)$ holds if and only if $x\in\{a,b\}$
or equivalently $A$ does not contain three distinct collinear points. 
A typical example of such a set is the unit circle in the plane. 
Another example is any set $M$ first constructed in \cite{Mazurkiewicz1914}) that has exactly two points in common with 
every line in the plane. Note that the set $M$ (described above) and 
$K$ (the unit circle in the plane) are trivially betweenness 
isomorphic. Therefore, finding all subsets of the Euclidean plane that 
are betweenness isomorphic to a given set is rather difficult. But one 
can try to decide when two given subsets of the Euclidean plain of a 
given form (e.g., two sets such that each of them consists exactly of 
$l$ points or lines) are betweenness isomorphic. Moreover, one can ask 
if any classification of a family of sets of the same form (e.g., the 
family of all sets consisting exactly of $l$ points or lines) in the 
language of betweenness isomorphism would be possible. To our best 
knowledge, Wies{\l}aw Kubi\'{s} was one of the first who posed
(in personal communication) two questions in this direction. 
The first one reads as follows.

\begin{problem}\label{prob1x}
Let $l$ be a natural number. Let $S,R$ be subsets of the Euclidean plane,
each of them consisting of a circle and $l$ distinct points in the interior of the circle.
\begin{enumerate}
\item[{\rm (A)}] When are the sets $S$ and $R$ betweenness isomorphic?
\item[{\rm (B)}] How many classes of betweenness isomorphism of sets as above are there, and how to characterize them?
\end{enumerate}  
\end{problem}

The second question is, in spirit, the same as the above one,
with points in the interior replaced by another concentric circle. 

\begin{problem}\label{prob2x}
Let $S,R$ be subsets of the Euclidean plane, each of them consisting of two
distinct concentric circles.
\begin{enumerate}
\item[{\rm (A)}] When are the sets $S$ and $R$ betweenness isomorphic?
\item[{\rm (B)}] How many classes of betweenness isomorphism of sets as above are there, and how to characterize them?
\end{enumerate}  
\end{problem}

Let us note that Problem 3.29.3 on page 66 in \cite{Vel1993} is of a similar type as it concerns the classification of betweenness isomorphism classes of convex polytopes in Euclidean spaces.

A classification of betweenness preserving maps defined on convex planar sets was 
recently obtained in \cite{KubisMorawiecZurcher2022}. Namely, either the image is 
contained in the union of a line, and a single point outside of the line, or the image 
consists of certain five points, or else the mapping is a partial homography, i.e., 
a mapping that can be extended to a homography. Additionally, if the domain is an open
convex set, then either the image is contained in a line or else the mapping is a partial
homography. Unfortunately, this classification cannot be applied to answer \Cref{prob1x} 
because the problem does not concern convex subsets of the Euclidean plane.

The present paper aims to answer question (A) of \Cref{prob1x} and provides some partial
answers to question (B). More precisely, in \Cref{sec3}, we give a necessary and 
sufficient condition for two circles, each with $l$ points in its interiors, to be 
betweenness isomorphic (see \Cref{t:characterization}). We also formulate a useful 
consequence (see \Cref{c:characterization}), which we then apply in \Cref{sec4} to 
answer question (B) of \Cref{prob1x} in the cases $l=1$ (see \Cref{t:l=1}), $l=2$  
(see \Cref{t:l=2}), and $l=3$ assuming additionally that the points in the interior 
of each of the circles are collinear (see \Cref{t:isoclasses}).

\section{Preliminaries}

\subsection{Notation}
Let $S\subset\mathbb R^2$. We say that a point $c\in S$ is \emph{extreme} in $S$ if there
are no $a,b\in S$ such that $c\in(a,b)$. We denote by $\ext(S)$ the set of all extreme 
points in $S$.
 
Let $A\subset S\subset\mathbb R^2$. We say that the set $A$ is \emph{collinearly closed} 
in $S$ if $c\in A$ whenever $c\in S$ and there exist distinct $a,b\in A\setminus\{c\}$ such that $a,b,c$ are collinear. 
We define the \emph{collinear hull} $\chS(A)$ of $A$ in $S$ as the smallest set which 
is collinearly closed in $S$ and which contains $A$.
 
\begin{lemma}\label{l:ext-ch}
Let $S,R\subset\mathbb R^2$ and $A\subset S$. Suppose that $f\colon S\to R$ is a betweenness isomorphism. Then:
\begin{enumerate}[label={\rm (\roman*)}]
\item\label{ext} $f(\ext(S))=\ext(R)$,
\item\label{ch} $f(\chS(A))=\chR(f(A))$.
\end{enumerate} 
\end{lemma}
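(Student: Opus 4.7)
Both parts will follow from the fact that a betweenness isomorphism, together with its inverse, transports every betweenness-definable property back and forth between $S$ and $R$. I will use throughout that $f^{-1}\colon R\to S$ is again a betweenness isomorphism.

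For part \ref{ext}, I simply unfold the definition. A point $c\in S$ is non-extreme in $S$ exactly when there exist $a,b\in S$ with $c\in(a,b)$. Since $f$ is a betweenness isomorphism, $c\in(a,b)$ is equivalent to $f(c)\in(f(a),f(b))$, and since $f$ is bijective, the existence of such $a,b\in S$ is equivalent to the existence of $a',b'\in R$ with $f(c)\in(a',b')$, i.e.\ to $f(c)$ being non-extreme in $R$. Passing to complements and again using bijectivity gives $f(\ext(S))=\ext(R)$.

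For part \ref{ch}, the key observation is that three \emph{distinct} points $a,b,c\in\mathbb R^2$ are collinear iff exactly one of them lies in the open segment determined by the other two; so collinearity of distinct triples is a betweenness-definable notion. Combined with part \ref{ext}-style reasoning, this yields the following intermediate claim: a set $B\subseteq S$ is collinearly closed in $S$ iff $f(B)$ is collinearly closed in $R$. Indeed, take $c\in S$ and distinct $a,b\in B\setminus\{c\}$ with $a,b,c$ collinear; one of the three betweenness relations among $a,b,c$ holds, hence the corresponding relation among $f(a),f(b),f(c)$ holds, making $f(a),f(b),f(c)$ three distinct collinear points in $R$ with $f(a),f(b)\in f(B)\setminus\{f(c)\}$. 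So if $f(B)$ is collinearly closed in $R$ then $f(c)\in f(B)$ and hence $c\in B$; the reverse direction uses $f^{-1}$ in place of $f$.

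With this claim in hand, the two inclusions are formal. Since $\chS(A)$ is collinearly closed in $S$, its image $f(\chS(A))$ is collinearly closed in $R$ and contains $f(A)$, so by minimality $\chR(f(A))\subseteq f(\chS(A))$. Applying the same argument to the betweenness isomorphism $f^{-1}$ and the set $f(A)\subseteq R$ gives $\chS(A)\subseteq f^{-1}(\chR(f(A)))$, i.e.\ $f(\chS(A))\subseteq\chR(f(A))$, completing the proof. I do not anticipate any serious obstacle here; the only point requiring a moment of care is the translation between ``collinear'' and ``one strictly between the other two'' for distinct triples, which is exactly what makes the collinear-closure concept betweenness-invariant.
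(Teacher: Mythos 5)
Your proof is correct and follows essentially the same route as the paper: part \ref{ext} by the same unfolding of the definition of extreme point, and part \ref{ch} via the same intermediate claim that $B$ is collinearly closed in $S$ iff $f(B)$ is collinearly closed in $R$ (using that collinearity of distinct triples is expressible through betweenness), followed by the identical two-inclusion minimality argument applied to $f$ and $f^{-1}$.
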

	
\begin{proof}
\ref{ext} For every $c\in S$, it holds
\begin{equation*}
\begin{split}
c\notin\ext(S)&\iff\exists a,b\in S:c\in(a,b)\iff\exists a,b\in S:f(c)\in(f(a),f(b))\\
&\iff\exists d,e\in R:f(c)\in(d,e)\iff f(c)\notin\ext(R),
\end{split}
\end{equation*}
which proves our assertion.
		
\ref{ch} Fix arbitrary $B\subset S$. Then
\begin{equation*}
\begin{split}
B\text{ is collinearly}&\text{ closed in }S\iff\forall a,b\in B\ \,\forall c\in S:\text{ if }a,b,c\text{ are pairwise distinct collinear, then }c\in B\\
\iff&\forall a,b\in B\ \,\forall c\in S:\text{ if }f(a),f(b),f(c)\text{ are  pairwise distinct collinear, then }f(c)\in f(B)\\
\iff&\forall d,e\in f(B)\ \,\forall g\in R:\text{ if }d,e,g\text{ are pairwise distinct collinear, then }g\in f(B)\\
\iff&f(B)\text{ is collinearly closed in }R.
\end{split}
\end{equation*}
In particular, the set $f(\chS(A))$ is collinearly closed in $R$ and contains $f(A)$.
Hence $f(\chS(A))\supset \chR(f(A))$. As $f^{-1}$ is also a betweenness isomorphism,
the same argument gives $f^{-1}(\chR(f(A)))\supset \chS(f^{-1}(f(A)))$.
As $f^{-1}$ is a bijection, the latter implies $\chR(f(A))\supset f(\chS(A))$.
Therefore, the two sets are equal.
\end{proof}

\subsection{The group $G_l$}\label{subsec:group}
Let us fix $l\in\mathbb N$. Any finite sequence of elements of $\{1,\ldots,l\}$ will be 
called a \emph{configuration}. A configuration $(i_1,\ldots,i_n)$ will be called 
\emph{irreducible} if $i_k\neq i_{k+1}$ for every $k\in\{1,\ldots,n-1\}$; otherwise the 
configuration will be called \emph{reducible}. Note that the empty sequence $\emptyset$ 
(of length $n=0$) is an irreducible configuration. (Also, all configurations of length 
$n=1$ are irreducible.)
	
Let $G_l$ be the set of all irreducible configurations. We consider the binary operation on $G_l$ given by concatenation followed by reduction (if necessary), namely
\begin{equation}
\label{e:Op}
(i_1,\ldots,i_n)(j_1,\ldots,j_m):=
(i_1,\ldots,i_{n-k}, j_{1+k},\ldots,j_m),
\end{equation}
where
\[k=\max\left\{\strut r\in \{0,1,2,\dots, \min(n,m)\} :
i_{n-p} = j_{1+p} \text{ for all } 0\le p<r  \right\}.\]   
Let us explain the meaning of~\eqref{e:Op}.
After concatenating two irreducible configurations $(i_1,\ldots,i_n)$, $(j_1,\ldots,j_m)$ we obtain a new configuration $(i_1,\ldots,i_n,j_1,\ldots,j_m)$.
If this new configuration is irreducible then no reduction is necessary.
Otherwise, it holds $i_n=j_1$. In this case, we `remove' the elements $i_n,j_1$ from the configuration, so that we obtain another configuration $(i_1,\ldots,i_{n-1},j_2,\ldots,j_m)$.
If this configuration is irreducible then the reduction is completed.
Otherwise, it holds $i_{n-1}=j_2$. In that case, we `remove' the elements $i_{n-1},j_2$, so that we obtain yet another configuration $(i_1,\ldots,i_{n-2},j_3,\ldots,j_m)$. We repeat this process until we obtain an irreducible (possibly empty) configuration, which will be the outcome of our reduction.
 
\begin{lemma}
The set $G_l$, together with the binary operation given by concatenation followed by reduction, is a group.
\end{lemma}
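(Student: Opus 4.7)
The plan is to verify the group axioms for $G_l$, the only real difficulty being associativity. The empty configuration $\emptyset$ is a two-sided identity, since concatenating it with any $g\in G_l$ on either side returns the sequence $g$, which is already irreducible so no reduction is needed. For $g=(i_1,\ldots,i_n)\in G_l$, the candidate inverse $g^{-1}:=(i_n,\ldots,i_1)$ works: the concatenation $(i_1,\ldots,i_n,i_n,\ldots,i_1)$ collapses symmetrically pair by pair under the reduction procedure to the empty configuration, and symmetrically for $g^{-1}\cdot g$.

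To handle associativity I plan to realise $G_l$ inside $\mathrm{Sym}(G_l)$ via a Cayley-type action by involutions. For each $i\in\{1,\ldots,l\}$ define $\tau_i\colon G_l\to G_l$ by
\[
\tau_i(\emptyset):=(i),\qquad \tau_i\bigl((i,i_2,\ldots,i_n)\bigr):=(i_2,\ldots,i_n),\qquad \tau_i\bigl((j,i_2,\ldots,i_n)\bigr):=(i,j,i_2,\ldots,i_n)\ \text{ for }\ j\neq i.
\]
A routine three-case check shows $\tau_i\circ\tau_i=\mathrm{id}$, so $\tau_i\in\mathrm{Sym}(G_l)$. For an arbitrary, possibly reducible configuration $w=(i_1,\ldots,i_n)$, set $T_w:=\tau_{i_1}\circ\cdots\circ\tau_{i_n}$ with $T_\emptyset:=\mathrm{id}$. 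The key observation is that deleting a single pair $i_k=i_{k+1}$ from $w$ leaves $T_w$ unchanged, since the corresponding two factors in the composition equal $\tau_{i_k}\circ\tau_{i_k}=\mathrm{id}$. Iterating, $T_w$ depends only on the irreducible configuration produced from $w$ by the reduction procedure described just before the lemma.

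Define $\Phi\colon G_l\to\mathrm{Sym}(G_l)$ by $\Phi(g):=T_g$. A short induction on $|g|$ using irreducibility shows $\Phi(g)(\emptyset)=g$, so $\Phi$ is injective. For $g,h\in G_l$, the mere concatenation of the sequences $g$ and $h$ reduces to $g\cdot h$, so by the key observation $\Phi(g\cdot h)=T_g\circ T_h=\Phi(g)\circ\Phi(h)$. Associativity in $G_l$ now transfers from $\mathrm{Sym}(G_l)$: for any $g_1,g_2,g_3\in G_l$,
\[
\Phi\bigl((g_1\cdot g_2)\cdot g_3\bigr)=\Phi(g_1)\circ\Phi(g_2)\circ\Phi(g_3)=\Phi\bigl(g_1\cdot(g_2\cdot g_3)\bigr),
\]
and injectivity of $\Phi$ gives the desired equality. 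The main obstacle is the key observation about invariance of $T_w$ under single reduction moves; once established, it converts the slightly awkward combinatorial reduction into the transparent algebra of composing involutions in a symmetric group.
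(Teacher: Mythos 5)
Your argument is correct, and it diverges from the paper only where the paper says essentially nothing: the published proof consists of naming the identity (the empty configuration) and the inverse of $(i_1,\ldots,i_n)$ (namely $(i_n,\ldots,i_1)$), exactly as you do, and declaring associativity ``easy to check'', which tacitly points to a direct case analysis of how the cancellations at the two junctures of $(g_1g_2)g_3$ and $g_1(g_2g_3)$ interact. You instead establish associativity by the classical van der Waerden trick: each generator $i$ acts on $G_l$ by the involution $\tau_i$, the operator $T_w$ attached to an arbitrary word $w$ is unchanged by deleting an adjacent equal pair and hence depends only on the reduced form of $w$, and the resulting map $\Phi\colon g\mapsto T_g$ into $\mathrm{Sym}(G_l)$ satisfies $\Phi(gh)=\Phi(g)\circ\Phi(h)$ and is injective because $T_g(\emptyset)=g$; associativity then transfers from composition of maps. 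All the individual checks you invoke do go through: $\tau_i$ preserves irreducibility, $\tau_i\circ\tau_i=\mathrm{id}$ (using that an irreducible word beginning with $i$ has its second letter different from $i$), the induction giving $T_g(\emptyset)=g$, and $T_{gh}=T_g\circ T_h$ because the reduction of the concatenation proceeds by deleting adjacent equal pairs. The trade-off: the direct verification the paper gestures at is shorter to announce but fiddly to write out correctly, since reduction at one juncture can propagate into the other; your embedding costs the construction of the $\tau_i$ but makes the one nontrivial axiom immediate, and it is the standard way of exhibiting $G_l$ as the free product of $l$ copies of $\mathbb Z/2\mathbb Z$.
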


\begin{proof}
Associativity is easy to check.
The identity element is the empty configuration.
The inverse of $(i_1,\ldots,i_n)$ is $(i_n,\ldots,i_1)$.
\end{proof}

\subsection{The group action of $G_l$}\label{subsec:reversionETC}
Fix $l\in\mathbb N$. Let $C$ be a circle in the plane and let $\{c_1,\ldots,c_l\}$ be a set of points in the interior of $C$.

Following \cite{Kocik2013}, for every point $X$ in the interior of the circle $C$ we define $P_X\colon C\to C$, called the
\emph{reversion} map (through the point $X$), as follows: if $c\in C$, then $P_X(c)$ is the unique point of $C$ such that $X\in(c,P_X(c))$.
It is clear that for every point $X$ from the interior of the circle $C$ and every point $c\in C$,
we have $(P_X\circ P_X)(c)=c$, i.e., each reversion map is an involution. To shorten the notation, for every $i\in\{1,\ldots,l\}$, we will use the symbol $R_i$ for the reversion map through the point $c_i$, i.e., we put $R_i:=P_{c_i}$.

Now, we define a map $\alpha\colon C\times G_l\to C$ putting 
\[\alpha(c,\emptyset)=c\quad\text{and}\quad\alpha(c,(i_1,\dots,i_n))=(R_{i_n}\circ\dots\circ R_{i_1})(c)\quad\text{for every }(i_1,\dots,i_n)\neq\emptyset.\]

\begin{lemma}\label{l:ra}
The map $\alpha$ is a right action of the group $G_l$ on $C$.
\end{lemma}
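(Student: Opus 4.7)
The plan is to verify the two axioms of a right group action: the identity axiom $\alpha(c,\emptyset)=c$ and the compatibility axiom $\alpha(\alpha(c,g),h)=\alpha(c,gh)$ for all $c\in C$ and $g,h\in G_l$. The identity axiom is built into the definition of $\alpha$, so all the work lies in the compatibility axiom. The decisive ingredient will be that each reversion $R_i$ is an involution: $R_i\circ R_i=\mathrm{id}_C$ for every $i\in\{1,\ldots,l\}$, as already observed in the preceding paragraph.

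To set up the compatibility computation, I would fix $g=(i_1,\ldots,i_n)$ and $h=(j_1,\ldots,j_m)$ in $G_l$ and let $k$ be the reduction parameter from \eqref{e:Op}, so that $gh=(i_1,\ldots,i_{n-k},j_{1+k},\ldots,j_m)$ and $j_{1+p}=i_{n-p}$ for $0\le p<k$. Unwinding the definition of $\alpha$ yields
\[
\alpha(\alpha(c,g),h)=\bigl(R_{j_m}\circ\cdots\circ R_{j_1}\circ R_{i_n}\circ\cdots\circ R_{i_1}\bigr)(c),
\]
while
\[
\alpha(c,gh)=\bigl(R_{j_m}\circ\cdots\circ R_{j_{1+k}}\circ R_{i_{n-k}}\circ\cdots\circ R_{i_1}\bigr)(c).
\]
Equality of both sides thus reduces to showing that the middle block
\[
R_{j_k}\circ\cdots\circ R_{j_1}\circ R_{i_n}\circ\cdots\circ R_{i_{n-k+1}}
\]
is the identity on $C$.

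Substituting $j_{1+p}=i_{n-p}$ rewrites this block as a palindrome of reversions:
\[
R_{i_{n-k+1}}\circ\cdots\circ R_{i_{n-1}}\circ R_{i_n}\circ R_{i_n}\circ R_{i_{n-1}}\circ\cdots\circ R_{i_{n-k+1}}.
\]
Now the involution property $R_{i_n}\circ R_{i_n}=\mathrm{id}_C$ collapses the innermost pair, and an immediate induction on $k$ peels off each successive pair of identical reversions until only the identity map remains. This yields the compatibility axiom and completes the verification.

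The main obstacle, if one can call it that, is purely bookkeeping: one has to align the indices produced by the reduction rule \eqref{e:Op} with the reversed order in which reversions appear in the definition of $\alpha$, and then recognise the resulting composition as a palindrome. Once this identification is made, the involution property of reversions does all the geometric work and the argument is essentially combinatorial.
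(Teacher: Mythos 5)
Your proof is correct and follows essentially the same route as the paper, which simply asserts that the compatibility identity $\alpha(\alpha(c,g),g')=\alpha(c,gg')$ is clear from the definition of $\alpha$ (together with the involutivity of reversions noted just before). You have merely written out in full the index bookkeeping and the palindrome cancellation that the paper leaves implicit.
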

	
\begin{proof}
One just needs to check that $\alpha(\alpha(c,g),g')=\alpha(c,gg')$ for all $c\in C$ and $g,g'\in G_l$, which is clear by the definition of $\alpha$. 
\end{proof}
	
In the following, we write shortly $c\cdot g$ instead of $\alpha(c,g)$.
	
For every $c\in C$, let
\[\orb(c)=\left\{c\cdot g:g\in G_l\right\}\]
be the \emph{orbit} of $c$, and let
\[\stab(c)=\left\{g\in G_l:c\cdot g=c\right\}\]
be the \emph{stabilizer} of $c$.

By standard properties of a group action, we have that for all $c,c'\in C$, either $\orb(c)=\orb(c')$ or $\orb(c)\cap \orb(c')=\emptyset$. Thus the set of orbits of points $c\in C$ forms a partition of $C$. In fact, the orbits are the
equivalence classes of the relation $\sim$ on $C$ defined as follows:
\[c\sim c'\iff\exists g\in G_l: c=c'\cdot g.\] 
Note also that the orbit of any point $c\in C$ is a finite or countable set.

Of course, the action $\alpha$ depends on the precise choice of the points $c_1,\ldots,c_l$ (and also on the order in which these points are indexed).
Usually, it is clear from the context which points (and in which order) we use.

\section{Circles with points inside}\label{sec3}
In this section, we show that the existence of a betweenness isomorphism
between $S$ and $R$ (which we assume to be of the form described in 
\Cref{prob1x}) is closely related to the isomorphism of the corresponding group actions. 
This helps us to find necessary and sufficient conditions on $S$ and $R$ to be betweenness isomorphic.

For the rest of this section, we fix $l\in\mathbb N$ and two sets $S=C\cup\{c_1,\ldots,c_l\}$ and $R=D\cup\{d_1,\ldots,d_l\}$, where $C,D$ are 
circles in the plane, $c_1,\ldots,c_l$ are pairwise distinct points from the interior of $C$, and $d_1,\ldots,d_l$ are pairwise distinct points from 
the interior of $D$. Our aim is to describe when $S$ and $R$ are betweenness isomorphic.
	
In the following, we denote by $\alpha$, resp. $\beta$, the right action (described in the previous section) of the group $G_l$ on the set $C$, resp. $D$. We will use the short notation 
\[c\cdot g = \alpha(c,g)\quad\text{and}\quad d\cdot g=\beta(d,g),\]
for all $c\in C$, $d\in D$ and $g\in G_l$.

Recall that an \emph{isomorphism} of the two group actions $\alpha$ and $\beta$ is a pair $(\psi,\varphi)$, where $\psi$ is an automorphism of the group $G_l$ and $\varphi\colon C\to D$ is a bijection such that
\[\varphi(c\cdot g)=\varphi(c)\cdot\psi(g)\quad\text{for all }c\in C\text{ and }g\in G_l.\]
	
For a permutation $\sigma\colon\{1,\ldots,l\}\to\{1,\ldots,l\}$, let $\psi_\sigma$ be the automorphism of the group $G_l$ given by
\[\psi_\sigma(\emptyset)=\emptyset\text{ and }\psi_\sigma(i_1,\ldots,i_n)=(\sigma(i_1),\ldots,\sigma(i_n))\quad\text{for every }n\in\mathbb N\text{ and every }g=(i_1,\ldots,i_n)\in G_l.\] 

\begin{theorem}\label{t:characterization}
Let $f\colon S\to R$ be a bijection. Then $f$ is a betweenness isomorphism 
of $S$ and $R$ if and only if
\begin{enumerate}[label={\rm (\Roman*)}]
\item\label{fci} $f(\{c_1,\ldots,c_l\})=\{d_1,\ldots,d_l\}$,
\item\label{fch} $f|_{\chS(\{c_1,\ldots,c_l\})}$ is a 
betweenness isomorphism of $\chS(\{c_1,\ldots,c_l\})$ and 
$\chR(\{d_1,\ldots,d_l\})$,
\item\label{psif}
$(\psi_\sigma,f|_C)$ is an isomorphism of the group actions $\alpha$ and $\beta$,
where the permutation $\sigma$ is defined by $f(c_i)=d_{\sigma(i)}$ for every $i\in\{1,\ldots,l\}$.
\end{enumerate}
\end{theorem}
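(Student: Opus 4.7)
Assume $f$ is a betweenness isomorphism. I would first verify that $\ext(S)=C$ and $\ext(R)=D$: each $c_i$ lies strictly between the two points where the chord of $C$ through $c_i$ meets $C$, while each $c\in C$ cannot lie in $(a,b)$ for any $a,b\in S$, since a segment with endpoints in the closed disk meets $C$ only at its circle-endpoints. Lemma \ref{l:ext-ch}\ref{ext} then gives \ref{fci}, and Lemma \ref{l:ext-ch}\ref{ch} applied with $A=\{c_1,\ldots,c_l\}$ gives \ref{fch}. For \ref{psif}, I would define $\sigma$ by $f(c_i)=d_{\sigma(i)}$ and prove $f(c\cdot g)=f(c)\cdot\psi_\sigma(g)$ by induction on the length of $g\in G_l$. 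The key to the inductive step is that the reversion $R_i(c)$ is characterized as the unique point of $C\setminus\{c\}$ with $c_i\in(c,R_i(c))$; preservation of betweenness by $f$ then forces $f(R_i(c))$ to be the unique point of $D\setminus\{f(c)\}$ with $d_{\sigma(i)}\in(f(c),f(R_i(c)))$, namely $f(c)\cdot(\sigma(i))$.

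\textbf{Reverse direction.} Assume \ref{fci}--\ref{psif}. Note that \ref{fci} and bijectivity of $f$ force $f(C)=D$. I would verify $c\in(a,b)\iff f(c)\in(f(a),f(b))$ by case analysis on the position of $a,b,c$ relative to $C$ and $\{c_1,\ldots,c_l\}$. When all three lie on $C$, both sides are vacuous, as $C$ has no three collinear points. When $c=c_i$ is the unique interior point and $a,b\in C$, the relation $c_i\in(a,b)$ reads $b=R_i(a)=a\cdot(i)$, which by \ref{psif} transforms to $f(b)=f(a)\cdot(\sigma(i))$, equivalent to $d_{\sigma(i)}=f(c)\in(f(a),f(b))$. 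When exactly one of $a,b$ is interior and $c\in C$, both sides are false because any segment between an interior point and a boundary point meets $C$ only at the boundary endpoint. When all three points are interior, they lie in $\chS(\{c_1,\ldots,c_l\})$ and \ref{fch} settles the equivalence directly.

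\textbf{Main obstacle.} The delicate remaining case has two interior points and one boundary point, say $a=c_i$, $c=c_j$, $b\in C$. If $c_j\in(c_i,b)$, then $b$ is collinear with $c_i,c_j\in\chS(\{c_1,\ldots,c_l\})$, forcing $b\in\chS(\{c_1,\ldots,c_l\})$, and \ref{fch} gives the equivalence. If instead $b\notin\chS(\{c_1,\ldots,c_l\})$, then by \ref{fch} also $f(b)\notin\chR(\{d_1,\ldots,d_l\})$, so the symmetric collinearity argument on the $R$-side forbids $f(c_j)\in(f(c_i),f(b))$, matching $c_j\notin(c_i,b)$. This is the step where \ref{fch} and \ref{psif} must cooperate: \ref{fch} handles triples entirely inside the collinear hull, \ref{psif} handles triples entirely on the circles, and translating ``being in the collinear hull'' across $f$ is precisely what makes the two conditions jointly sufficient for mixed triples.
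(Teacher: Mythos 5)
Your argument is correct and is essentially the paper's own proof: Lemma~\ref{l:ext-ch} yields \ref{fci} and \ref{fch}, the uniqueness of the point $d\in D$ with $d_{\sigma(i)}\in(f(c),d)$ together with induction on the length of $g$ yields \ref{psif}, and the converse is the same case analysis, with \ref{fch} handling triples in the collinear hull, \ref{psif} handling an interior point between two circle points, and extremality respectively collinear closedness handling the remaining triples. Two cosmetic points: your enumeration omits the (trivial) configuration $c\in C$ with $a,b$ both interior, which the paper subsumes under its Case~3 ($c$ is extreme in $S$ and $f(c)$ is extreme in $R$), and in your last case the subcase $b\in\chS(\{c_1,\ldots,c_l\})$ with $c_j\notin(c_i,b)$ should be stated explicitly, though it follows at once from \ref{fch} since all three points then lie in the hull.
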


Before proving \Cref{t:characterization} we do a comment about condition~\ref{fci}.

\begin{remark}
Let us note that condition~\ref{fci} in \cref{t:characterization} follows from condition~\ref{fch}; so \cref{t:characterization} would hold even after deleting condition~\ref{fci} from its statement. This can be shown by an easy application of Lemma~\ref{l:ext-ch}\ref{ext} (and it will be explained in detail in \cref{r:Afterchara} \ref{extreme_points}). However, we will refer to~\ref{fci} many times, so we keep it in \cref{t:characterization}.
\end{remark}

\begin{proof}
Suppose first that $f\colon S\to R$ is a betweenness isomorphism of $S$ and $R$.

By assertion \ref{ext} of \Cref{l:ext-ch}, we have 
$f(C)=f(\ext(S))=\ext(R)=D$, and so \ref{fci} holds, 
and the permutation $\sigma$ from~\ref{psif} is defined correctly.
		
By assertion \ref{ch} of \Cref{l:ext-ch} and \ref{fci} 
(which we just proved), it holds
\begin{eqnarray*}
f(\chS(\{c_1,\ldots,c_l\}))=\chR(\{d_1,\ldots,d_l\}),
\end{eqnarray*}
and~\ref{fch} follows.

By~\ref{fci} (which we already verified), it follows that $f|_C$ is a bijection between $C$ and $D$. So, to verify~\ref{psif}, we must only show that $f(c\cdot g)=f(c)\cdot\psi_\sigma(g)$ for all $c\in C$ and $g\in G_l$.

First, fix $c\in C$ and $i\in\{1,\ldots,l\}$. 
As $f$ is a betweenness homomorphism and $c_i\in (c,c\cdot(i))$, it follows that
\begin{equation*}
d_{\sigma(i)}=f(c_i)\in(f(c),f(c\cdot(i))).
\end{equation*}
Simultaneously, it holds
\[d_{\sigma(i)}\in(f(c),f(c)\cdot(\sigma(i))).\]
As there is exactly one $d\in D$ such that $d_{\sigma(i)}\in(f(c),d)$, it follows that
\begin{eqnarray*}
f(c\cdot(i))=f(c)\cdot(\sigma(i))=f(c)\cdot\psi_\sigma(i).
\end{eqnarray*}
Now, a simple induction yields
\begin{equation*}
f(c\cdot(i_1,\ldots,i_n))=f(c)\cdot(\sigma(i_1),\ldots,\sigma(i_n))=f(c)\cdot\psi_\sigma(i_1,\ldots,i_n)
\end{equation*}
for every $n\in\mathbb N$ and every $g=(i_1,\ldots,i_n)\in G_l$, and the conclusion follows.
		
Now, suppose that~\ref{fci}, \ref{fch} and~\ref{psif} hold true. Let us fix $a,b,c\in S$. We must show that
\begin{equation*}
c\in(a,b)\iff f(c)\in(f(a),f(b)).
\end{equation*}
We distinguish four cases.
		
\textbf{Case 1:} $a,b,c\in\chS(\{c_1,\ldots,c_l\})$.
		
In this case, just apply~\ref{fch}.
		
\textbf{Case 2:} Exactly two of the points $a$, $b$, $c$ belong to $\chS(\{c_1,\ldots,c_l\})$.
		
As $f$ maps $\chS(\{c_1,\ldots,c_l\})$ onto $\chR(\{d_1,\ldots,d_l\})$ (by~\ref{fch}), exactly two of the points $f(a)$, $f(b)$, $f(c)$ belong to $\chR(\{d_1,\ldots,d_l\})$. By the definition of a collinear hull, it follows that $a,b,c$ are not collinear, and similarly $f(a)$, $f(b)$, $f(c)$ are not collinear. So $c\notin(a,b)$ and $f(c)\notin(f(a),f(b))$.
		
\textbf{Case 3:} $c\in C$.

In that case, $c\notin(a,b)$ as $c$ is extreme in $S$, and $f(c)\notin(f(a),f(b))$ as $f(c)$ is extreme in $R$, by assertion~\ref{ext} of \Cref{l:ext-ch}.
		
\textbf{Case 4:} $c\notin C$ and $a,b\in C$.
		
Then there is $i\in\{1,\ldots,l\}$ such that $c=c_i$. If $\sigma$ is as in~\ref{psif}, then
$f(c)=d_{\sigma(i)}$. So we have
\[c\in(a,b)\iff b=a\cdot(i)\stackrel{\ref{psif}}{\iff}f(b)=f(a)\cdot(\sigma(i))\iff f(c)=d_{\sigma(i)}\in(f(a),f(b)).\]
		
Finally, we note that the four cases above cover all possible situations.
\end{proof}
	
It is easy to see that the collinear hull $\chS(\{c_1,\ldots,c_l\})$ is the union of some (in fact, finitely many and pairwise disjoint) orbits given by the action $\alpha$. Similarly, $\chR(\{d_1,\ldots,d_l\})$ is the union of some orbits given by the action $\beta$. So we can define the right action $\alpha'$, resp. $\beta'$, of the group $G_l$ on the set $C\setminus\chS(\{c_1,\ldots,c_l\})$, resp. $D\setminus\chR(\{d_1,\ldots,d_l\})$, as the restriction of $\alpha$, resp. $\beta$, to the corresponding set. 

\begin{remark}\label{r:characterization}
\Cref{t:characterization} remains true if we replace condition \ref{psif} with 
\begin{enumerate}[label={\rm (III')}]
\item\label{psif'}
$(\psi_\sigma,f|_{C\setminus\chS(\{c_1,\ldots,c_l\})})$ is an isomorphism of the group actions $\alpha'$ and $\beta'$, where the permutation $\sigma$ is defined by $f(c_i)=d_{\sigma(i)}$ for every $i\in\{1,\ldots,l\}$.
\end{enumerate}
\end{remark}

\begin{proof}
Almost the same proof still works, only Case 4 would have to be restricted to `$c\notin C$ and $a,b\in C\setminus\chS(\{c_1,\ldots,c_l\})$' (then the four cases still cover all possible situations).
\end{proof}	

The following example shows that condition \ref{fch} in \Cref{t:characterization} cannot be replaced by
\begin{enumerate}[label={\rm (ii')}]
\item\label{fch'} $f|_{\{c_1,\ldots,c_l\}}$ is a betweenness 
isomorphism of $\{c_1,\ldots,c_l\}$ and $\{d_1,\ldots,d_l\}$.
\end{enumerate}

\begin{example}
Fix $l=2$ and let $R=S=C\cup\{c_1,c_2\}$. Denote by $x_1$ and $x_2$ the two points of $C$ that are collinear with $c_1$ and $c_2$. Define a map $f\colon S\to R$ putting
\[f(x_1)=x_2,\quad f(x_2)=x_1,\quad f(x)=x\text{ for every }x\in S\setminus\{x_1,x_2\}.\]
It is clear that $f$ is not a betweenness isomorphism. However, \ref{fci}, \ref{fch'}, and \ref{psif'} hold with $\sigma$ being the identity permutation. To see that also \ref{psif} holds (with the identity permutation $\sigma$) observe that for all $i,j\in\{1,2\}$ we have
\begin{equation*}
f(x_i\cdot(j))=f(x_{3-i})=x_i=x_{3-i}\cdot(j)=f(x_i)\cdot(j),
\end{equation*}
and by induction
\begin{equation*}
f(x_i\cdot g)=f(x_i)\cdot g\quad\text{for every }g\in G_2,
\end{equation*}
which is what we wanted to show.
\end{example}

From now on, put
\begin{equation*}
\mathcal O_C=\left\{\orb(c):c\in C\setminus\chS\{c_1,\ldots,c_l\}\right\},
\end{equation*}
and
\begin{equation*}
\mathcal O_D=\left\{\orb(d):d\in D\setminus\chR\{d_1,\ldots,d_l\}\right\}.
\end{equation*}

\begin{corollary}\label{c:characterization}
The sets $S$ and $R$ are betweenness isomorphic if and only if there 
exists a betweenness isomorphism  $\tilde f$ of 
$\chS(\{c_1,\ldots,c_l\})$ and $\chR(\{d_1,\ldots,d_l\})$ 
with the following property:
\begin{enumerate}[label={\rm (O)}]
\item\label{P} Let $\sigma\colon\{1,\ldots,l\}\to\{1,\ldots,l\}$ be the permutation that corresponds to $\tilde f$, i.e.,
\begin{equation}\label{tilde-fci}
\tilde f(c_i)=d_{\sigma(i)}\quad\text{for every }i\in\{1,\ldots,l\}.
\end{equation}
Then there exists a bijection $h\colon\mathcal O_C\to\mathcal O_D$ such that, for every $\orb\in\mathcal O_C$, there are $x_\orb\in \orb$ and $y_\orb\in h(\orb)$ with
\begin{equation}\label{eq:stabs}
\stab(y_\orb)=\left\{\psi_\sigma(g):g\in \textbf{S}(x_\orb)\right\}.
\end{equation}
\end{enumerate}		
\end{corollary}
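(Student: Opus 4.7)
The plan is to apply \Cref{r:characterization}: a bijection $f\colon S\to R$ is a betweenness isomorphism precisely when conditions~\ref{fci}, \ref{fch}, and~\ref{psif'} hold. Once $\tilde f$ is fixed with its induced permutation $\sigma$, conditions~\ref{fci} and~\ref{fch} are encoded in $\tilde f$ itself, so the content of the corollary reduces to showing that the existence of a $\psi_\sigma$-equivariant bijection $\varphi\colon C\setminus\chS(\{c_1,\ldots,c_l\})\to D\setminus\chR(\{d_1,\ldots,d_l\})$ intertwining $\alpha'$ with $\beta'$ is equivalent to~\ref{P}. This is exactly the orbit--stabilizer classification of $G_l$-set isomorphisms.

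For the forward direction, I start from a betweenness isomorphism $f\colon S\to R$, let $\tilde f$ be the restriction of $f$ to $\chS(\{c_1,\ldots,c_l\})$ (a betweenness isomorphism of the collinear hulls by \Cref{t:characterization}) and let $\sigma$ be the permutation it induces via~\eqref{tilde-fci}. \Cref{r:characterization} gives that $f$ restricted to $C\setminus\chS(\{c_1,\ldots,c_l\})$ is $\psi_\sigma$-equivariant, hence sends orbits of $\alpha'$ to orbits of $\beta'$; this defines the bijection $h\colon\mathcal O_C\to\mathcal O_D$. Picking any $x_\orb\in\orb$ and setting $y_\orb:=f(x_\orb)$, equivariance yields
\[
y_\orb\cdot\psi_\sigma(g)=f(x_\orb\cdot g)=f(x_\orb)=y_\orb\iff x_\orb\cdot g=x_\orb,
\]
which is~\eqref{eq:stabs}.

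For the backward direction, I construct $f$ by declaring $f:=\tilde f$ on $\chS(\{c_1,\ldots,c_l\})$ and, on each orbit $\orb\in\mathcal O_C$, setting
\[
f(x_\orb\cdot g):=y_\orb\cdot\psi_\sigma(g)\qquad\text{for every }g\in G_l.
\]
The main routine verification is well-definedness and injectivity on $\orb$: both follow from the chain
\[
x_\orb\cdot g=x_\orb\cdot g'\iff gg'^{-1}\in\stab(x_\orb)\iff\psi_\sigma(gg'^{-1})\in\stab(y_\orb)\iff y_\orb\cdot\psi_\sigma(g)=y_\orb\cdot\psi_\sigma(g'),
\]
whose middle equivalence uses~\eqref{eq:stabs} together with the fact that $\psi_\sigma$ is a group automorphism. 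Surjectivity of $f|_\orb$ onto $h(\orb)$ is automatic from transitivity of $G_l$ on orbits. Because $\mathcal O_C$ and $\mathcal O_D$ partition $C\setminus\chS(\{c_1,\ldots,c_l\})$ and $D\setminus\chR(\{d_1,\ldots,d_l\})$, and $h$ is a bijection between these partitions, the orbit-wise bijections paste to a $\psi_\sigma$-equivariant bijection of the complements of the hulls. Combined with $\tilde f$, the resulting $f\colon S\to R$ satisfies~\ref{fci}, \ref{fch}, and~\ref{psif'} of \Cref{r:characterization}, hence is a betweenness isomorphism.

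The only genuine bookkeeping is the well-definedness check above, and once~\eqref{eq:stabs} is given this is purely formal; thus I do not foresee a real obstacle beyond faithfully translating between $G_l$-set isomorphisms and orbit/stabilizer data.
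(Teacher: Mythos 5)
Your proposal is correct and follows essentially the same route as the paper: restrict $f$ to the collinear hull for the forward direction and read off \eqref{eq:stabs} from the equivariance in \ref{psif'}, and for the converse extend $\tilde f$ orbit-by-orbit via $f(x_\orb\cdot g)=y_\orb\cdot\psi_\sigma(g)$, checking well-definedness and bijectivity from \eqref{eq:stabs} and the bijection $h$, then invoking \Cref{t:characterization} together with \Cref{r:characterization}. The only detail left implicit (and handled in the paper by \Cref{r:Afterchara}) is that $\tilde f$ automatically maps $\{c_1,\ldots,c_l\}$ onto $\{d_1,\ldots,d_l\}$, so that $\sigma$ and condition \ref{fci} make sense; this is a minor omission, not a gap in the argument.
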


Before proving the above result, let us explain two details of its formulation.

\begin{remark}\label{r:Afterchara}
\begin{enumerate}[label={\rm (\roman*)}]
\item\label{extreme_points} If $\tilde f$ is a betweenness isomorphism of $\chS(\{c_1,\ldots,c_l\})$ and $\chR(\{d_1,\ldots,d_l\})$, then $\tilde f$ maps $\{c_1,\ldots,c_l\}$ bijectively onto $\{d_1,\ldots,d_l\}$ so that the definition of permutation $\sigma$ in condition \ref{P} is correct.
Indeed, for $l=1$, we have $\{c_1\}=\chS(\{c_1\})$ and $\{d_1\}=\chS(\{d_1\})$.
For $l\ge 2$, $c_1,\ldots,c_l$ are exactly all non-extreme points in 
$\chS(\{c_1,\ldots,c_l\})$, and similarly, $d_1,\ldots,d_l$ are exactly all 
non-extreme points in $\chS(\{d_1,\ldots,d_l\})$. So it is enough to apply assertion \ref{ext} of \Cref{l:ext-ch}.
\item If $f\colon S\to R$ is a betweenness isomorphism, then the map $\tilde f$ occurring in \Cref{c:characterization} can be chosen as the restriction of $f$ to $\chS(\{c_1,\ldots,c_l\})$. 
This choice will be made in the proof of \cref{c:characterization}.
\end{enumerate}
\end{remark}
 
\begin{proof}[Proof of \Cref{c:characterization}]
Suppose first that there exists a betweenness isomorphism $f\colon S\to R$.
Then~\ref{fci}, \ref{fch} and~\ref{psif'} hold by \Cref{t:characterization} and \Cref{r:characterization}.
Let $\tilde f$ be the restriction of $f$ to $\chS(\{c_1,\ldots,c_l\})$.
Then, by~\ref{fch}, $\tilde f$ is a betweenness isomorphism
of $\chS(\{c_1,\ldots,c_l\})$ and $\chR(\{d_1,\ldots,d_l\})$. 

Let $\sigma$ be the permutation defined by \eqref{tilde-fci}; cf.\
assertion \cref{extreme_points} of \cref{r:Afterchara} for its existence.
From~\ref{psif'} we see that the $f$-image of every orbit given by the action $\alpha'$ is an orbit given by the action $\beta'$.
So we can define the bijection $h\colon\mathcal O_C\to\mathcal O_D$ by
\begin{equation*}
h(\orb)=\{f(c):c\in\orb\}.
\end{equation*}
Finally, for every $\orb\in\mathcal O_C$ we fix an arbitrary $x_\orb\in \orb$ and put $y_\orb=f(x_\orb)$.
Then
\begin{equation*}
g\in \stab(x_\orb)\iff x_\orb\cdot g=x_\orb\stackrel{\ref{psif'}}{\iff}f(x_\orb)\cdot\psi_\sigma(g)=f(x_\orb)\iff y_\orb\cdot\psi_\sigma(g)=y_\orb\iff\psi_\sigma(g)\in \stab(y_\orb),
\end{equation*}
and~\eqref{eq:stabs} follows.

Suppose now that there exists a betweenness isomorphism $\tilde f\colon \chS(\{c_1,\ldots,c_l\})\to\chR(\{d_1,\ldots,d_l\})$ as in the statement of the corollary, i.e., condition \ref{P} holds. 

We note that, by \cref{r:Afterchara}, 
$\tilde f$ maps $\{c_1,\ldots,c_l\}$ onto $\{d_1,\ldots,d_l\}$. 

We define $f\colon S\to R$ as an extension of $\tilde f$, so we only need to specify values of $f$ on
\begin{equation*}
C\setminus\chS(\{c_1,\ldots,c_l\})=\bigcup\mathcal O_C=\left\{x_\orb\cdot g:\orb\in\mathcal O_C,g\in G_l\right\}.
\end{equation*}
For every $\orb\in\mathcal O_C$ and every $g\in G_l$, we put
\begin{equation}\label{e:fdef}
f(x_\orb\cdot g)=y_\orb\cdot\psi_\sigma(g).
\end{equation}
To verify the correctness of this definition,
assume that $x_\orb\cdot g=x_{\orb'}\cdot g'$ for some $\orb,\orb'\in\mathcal O_C$ and $g,g'\in G_l$. Then $\orb=\orb'$, otherwise $x_\orb\cdot g$, $x_{\orb'}\cdot g'$ would belong to distinct (and hence disjoint) orbits. The equality $\orb=\orb'$ implies $x_\orb=x_{\orb'}$ and by our assumption we have $x_\orb \cdot g=x_\orb \cdot g'$, and hence $g(g')^{-1}\in \stab(x_\orb)$.
This together with~\eqref{eq:stabs} implies $\psi_\sigma(g(g')^{-1})\in \textbf{S}(y_\orb)$,
and so $y_\orb\cdot\psi_\sigma(g(g')^{-1})=y_\orb$.
Applying (from the right) the group action with $\psi_\sigma(g')$ we get
\begin{equation*}
y_\orb\cdot\psi_\sigma(g)=(y_\orb\cdot\psi_\sigma(g(g')^{-1}))\cdot\psi_\sigma(g')
=y_\orb\cdot\psi_\sigma(g'),	
\end{equation*}
and the correctness of the definition follows.

From~\eqref{e:fdef}, we easily deduce that $f(x) \in \orb(y_\orb) = h(\orb)$
for every $x\in \orb$, hence $f(\orb) \subset h(\orb)$.

To verify that $f$ is a betweenness isomorphism, we will apply \Cref{t:characterization} and \Cref{r:characterization}. 

We start by showing that $f$ is a bijection. As it is an extension of $\tilde f$, it is enough to show that the restriction of $f$ to $C\setminus\chS(\{c_1,\ldots,c_l\})$ is injective and onto $D\setminus\chR(\{d_1,\ldots,d_l\})$.
Suppose first that $f(x_\orb\cdot g)=f(x_{\orb'}\cdot g')$ for some $\orb,\orb'\in\mathcal O_C$ and $g,g'\in G_l$. Since $f(\orb)\subset h(\orb)$, $f(\orb')\subset h(\orb')$ and $h$ is a bijection, we must have $\orb=\orb'$, in particular $x_\orb=x_{\orb'}$ and $y_\orb=y_{\orb'}$.
In that case, we have $y_\orb\cdot\psi_\sigma(g)=y_\orb\cdot\psi_\sigma(g')$, and hence $\psi_\sigma(g(g')^{-1})\in \stab(y_\orb)$. This jointly with~\eqref{eq:stabs} imply $g(g')^{-1}\in \stab(x_\orb)$. So $x_\orb\cdot g=x_\orb\cdot g'$, and the injectivity of $f$ follows.
Now, fix some
\begin{equation*}
d\in D\setminus\chR(\{d_1,\ldots,d_l\})=\bigcup\mathcal O_D.
\end{equation*}
As $h$ is a bijection, there is $\orb\in\mathcal O_C$ such that $d\in h(\orb)$. So we can find $g\in G_l$ such that $d=y_\orb\cdot g$. Then $d=f(x_\orb\cdot\psi_\sigma^{-1}(g))$, and the surjectivity of $f$ follows.
		
We will check conditions~\ref{fci} and \ref{fch} from \Cref{t:characterization}, as well as condition \ref{psif'} from \Cref{r:characterization}.

Condition~\ref{fci} holds as $\tilde f$ maps $\{c_1,\ldots,c_l\}$ onto 
$\{d_1,\ldots,d_l\}$  by \cref{r:Afterchara}, and as $f$ is an extension of $\tilde f$.
Condition~\ref{fch} holds as $f$ is an extension of $\tilde f$.
So it only remains to check condition~\ref{psif'}.
Let us fix $c\in C\setminus\chS(\{c_1,\ldots,c_l\})$ and $g\in G_l$, we must show that
\begin{equation*}
f(c\cdot g)=f(c)\cdot\psi_\sigma(g).
\end{equation*}
Let $\orb=\orb(c)$ be the orbit of $c$, and let $g'\in G_l$ be such that $c=x_\orb\cdot g'$. Then 
\begin{equation*}
f(c\cdot g)=f(x_\orb\cdot g'g)=y_\orb\cdot\psi_\sigma(g'g)=(y_\orb\cdot\psi_\sigma(g'))\cdot\psi_\sigma(g)=f(x_\orb\cdot g')\cdot\psi_\sigma(g)=f(c)\cdot\psi_\sigma(g),
\end{equation*}
as we needed.
\end{proof}

\section{Circles with collinear points inside}\label{sec4}
We now provide some partial answers to \Cref{prob1x}, namely in the case when the $l$ points in the interior of each of the circles are collinear.

\subsection{The case $l=1$}
\label{subsec:l1}
This case is very easy.
We deal with it by making use of~\Cref{c:characterization}.
	
\begin{theorem}\label{t:l=1}
Let $S,R$ be sets of the form $S=C\cup\{c_1\}$ and $R=D\cup\{d_1\}$, where $C,D$ are circles in the plane, $c_1$ is a point from the interior of $C$, and $d_1$ is a point from the interior of $D$. Then the sets $S$ and $R$ are betweenness isomorphic.
\end{theorem}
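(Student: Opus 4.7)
The plan is to apply \Cref{c:characterization} directly. The case $l=1$ is degenerate in the sense that there is essentially no combinatorial structure to match: the collinear hull is a singleton and all orbits are of size two with trivial stabilizers.

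First I would observe that $\chS(\{c_1\})=\{c_1\}$ and $\chR(\{d_1\})=\{d_1\}$. Indeed, by the definition of collinear closedness, a singleton is always (vacuously) collinearly closed, since there do not exist two distinct points in $A\setminus\{c\}$ whenever $|A|\le 1$. Consequently the map $\tilde f\colon\{c_1\}\to\{d_1\}$ sending $c_1$ to $d_1$ is trivially a betweenness isomorphism, and the associated permutation $\sigma$ of $\{1\}$ is the identity, so $\psi_\sigma$ is the identity automorphism of $G_1$.

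Next I would analyze the orbits. The group $G_1$ consists only of the irreducible configurations $\emptyset$ and $(1)$ (any longer configuration would repeat the symbol $1$), so $G_1\cong\mathbb Z/2\mathbb Z$. For every $c\in C$, the point $R_1(c)=P_{c_1}(c)$ is the second intersection of the line through $c$ and $c_1$ with $C$; since $c_1$ lies strictly inside $C$, this line is a secant and $R_1(c)\neq c$. Therefore $\stab(c)=\{\emptyset\}$ and $\orb(c)=\{c,R_1(c)\}$ has exactly two elements. Since $c_1\notin C$, we have $C\setminus\chS(\{c_1\})=C$, so $\mathcal O_C$ partitions $C$ into pairs and thus has cardinality $\mathfrak c$. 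An identical analysis yields that $\mathcal O_D$ also partitions $D$ into pairs and has cardinality $\mathfrak c$.

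Now I would choose any bijection $h\colon\mathcal O_C\to\mathcal O_D$ (which exists since both sets have the same cardinality) and, for each $\orb\in\mathcal O_C$, any $x_\orb\in\orb$ and any $y_\orb\in h(\orb)$. The stabilizer condition \eqref{eq:stabs} becomes
\begin{equation*}
\stab(y_\orb)=\{\emptyset\}=\{\psi_\sigma(\emptyset)\}=\{\psi_\sigma(g):g\in\stab(x_\orb)\},
\end{equation*}
which holds automatically. Thus all hypotheses of condition \ref{P} in \Cref{c:characterization} are satisfied, and that corollary produces a betweenness isomorphism between $S$ and $R$. No step is really hard here; the only thing worth checking carefully is the identification of the collinear hull and the fact that every interior point gives rise to fixed-point-free involutions on the circle, which is immediate from the geometry.
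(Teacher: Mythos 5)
Your proposal is correct and follows essentially the same route as the paper: identify $\chS(\{c_1\})=\{c_1\}$ and $\chR(\{d_1\})=\{d_1\}$, note that all orbits are two-point sets with trivial stabilizers so that $\mathcal O_C$ and $\mathcal O_D$ both have cardinality continuum, pick an arbitrary bijection $h$, and invoke \Cref{c:characterization}. Your extra remarks (that $G_1\cong\mathbb Z/2\mathbb Z$ and that $R_1$ is fixed-point free) are just slightly more explicit versions of what the paper states.
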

	
\begin{proof}
Trivially, $\chS(\{c_1\})=\{c_1\}$ and $\chR(\{d_1\})=\{d_1\}$. So the only map $\tilde f\colon\chS(\{c_1\})\to\chR(\{d_1\})$ (given by $\tilde f(c_1)=d_1$) is obviously a betweenness isomorphism; the permutation $\sigma$ that corresponds to $\tilde f$ is the identity. 

In the considered case every orbit from $\mathcal O_C$ is a two point subset of $C$, every orbit from $\mathcal O_D$ is a two point subset of $D$. Therefore, the cardinality of both sets $\mathcal O_C$ and $\mathcal O_D$ is continuum; 
we fix a bijection $h$ between them arbitrarily. For every $\mathbf O\in\mathcal O_C$, we also fix $x_{\mathbf O}\in\mathbf O$ and $y_{\mathbf O}\in h(\mathbf O)$ arbitrarily.
To see that \eqref{eq:stabs} holds (where $\psi_\sigma$ is the identity automorphism) it suffices to observe that all the stabilizers $\stab(c)$, where $c\in C$, and $\stab(d)$, where $d\in D$, are equal to the trivial subgroup $\{\emptyset\}$. 

Now, the conclusion immediately follows from \Cref{c:characterization}.
\end{proof}

\subsection{The case $l=2$}\label{subsec:l2}
This is still easy with the application of~\Cref{c:characterization}.

Before formulating the result for the case $l=2$, we prove the following useful fact.

\begin{lemma}\label{l:uperLowerFirst}
Let $S$ be a set of the form $S=C\cup\{c_1,c_2\}$, where $C$ is 
a circle in the plane and $c_1,c_2$ are two distinct points from 
the interior of $C$. Let $c\in C\setminus\chS(\{c_1,c_2\})$ 
and $(i_1,\ldots,i_m)\in\{1,2\}^m$ with $m\in\mathbb N$.
\begin{enumerate}[label={\rm (\roman*)}]
\item If $m$ is odd, then 
\begin{equation}\label{e:Rcc}
(R_{i_m}\circ\dots \circ R_{i_1})(c)\neq c.
\end{equation}
\item If $m$ is even and $(i_1,\ldots,i_m)
\in G_2\setminus\{\emptyset\}$, then \eqref{e:Rcc} holds.
\end{enumerate}
\end{lemma}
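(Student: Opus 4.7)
My plan is to reduce both (i) and (ii) to a single explicit computation after placing $c_1,c_2$ in a convenient position.

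Since $R_i\circ R_i=\mathrm{id}$, any configuration $(i_1,\ldots,i_m)$ reduces by successive cancellations of equal adjacent entries to an irreducible one of the same parity, and the corresponding composition of reversions is unchanged on $c$. So it suffices to prove that for every nonempty irreducible $(j_1,\ldots,j_n)\in G_2$ and every $c\in C\setminus\chS(\{c_1,c_2\})$, one has $(R_{j_n}\circ\cdots\circ R_{j_1})(c)\ne c$. For $l=2$, such words are alternating, so they are either of the form $(1,2,1,\ldots)$ or $(2,1,2,\ldots)$.

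Next, any Euclidean similarity $T$ of the plane preserves collinearity, betweenness, and the interior of $C$, and therefore conjugates reversions: $T\circ P_X=P_{T(X)}\circ T$ on $C$ for every interior point $X$. Applying such a $T$, I may assume $C$ is the unit circle and $c_i=(a_i,0)$ with $-1<a_1\ne a_2<1$; then $\chS(\{c_1,c_2\})\cap C=\{(1,0),(-1,0)\}$. I identify $C$ with $\mathbb{R}\cup\{\infty\}$ via the Weierstrass substitution $u=\tan(\theta/2)$, under which $(1,0)\leftrightarrow 0$ and $(-1,0)\leftrightarrow\infty$. A direct quadratic computation of the second intersection of the line through $(a_i,0)$ and $(\cos\theta,\sin\theta)$ with $C$ yields the M\"obius formula
\[R_i(u)=-\frac{k_i}{u},\qquad k_i:=\frac{1-a_i}{1+a_i}\in(0,\infty),\ k_1\ne k_2.\]

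Writing $\rho:=k_2/k_1$, so $\rho>0$ and $\rho\ne 1$, the four kinds of nonempty alternating compositions are: for $m\ge 1$, $(R_2R_1)^m(u)=\rho^m u$ and $(R_1R_2)^m(u)=\rho^{-m}u$; for $m\ge 0$, $R_1(R_2R_1)^m(u)=-k_1\rho^{-m}/u$ and $R_2(R_1R_2)^m(u)=-k_2\rho^m/u$. For $c\notin\chS(\{c_1,c_2\})$, i.e.\ $u\in\mathbb R\setminus\{0\}$, the first two equal $u$ only if $\rho^{\pm m}=1$, which is excluded since $\rho>0$ and $\rho\ne 1$, while the latter two force $u^2<0$, also impossible in $\mathbb R$. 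This proves both (i) and (ii). The main technical step is the verification of the M\"obius formula $R_i(u)=-k_i/u$; once this is in hand, the rest is immediate algebra.
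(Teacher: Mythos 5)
Your reduction to nonempty irreducible (hence alternating) words, and the algebra you do once the formula $R_i(u)=-k_i/u$ is in hand, are both fine; the problem is the normalization that produces this formula. A Euclidean similarity maps the centre of $C$ to the centre of the image circle, so if $T$ takes $C$ to the unit circle, it takes the line through $c_1,c_2$ to a line whose distance from the origin is the scaled distance of that line from the centre of $C$. Thus you can arrange $c_1=(a_1,0)$, $c_2=(a_2,0)$ only in the special case where $c_1,c_2$ are collinear with the centre of $C$; for a general chord line the image is a line $y=b$ with $b\neq 0$, and then the second-intersection computation yields a general M\"obius involution (the relation between $u$ and $v$ acquires a nonzero $u+v$ term), not $uv=-k_i$. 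Since the entire fixed-point analysis rests on the special form $R_i(u)=-k_i/u$, this is a genuine gap, not a cosmetic one.

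The strategy is salvageable, but it needs a non-Euclidean ingredient. Either conjugate by a projective transformation preserving the circle: such maps also satisfy $T\circ P_X=P_{T(X)}\circ T$ (reversions are defined purely by incidence) and they do act transitively on chords, so the line $c_1c_2$ can be moved onto a diameter; or keep $C$ fixed and use a projective parametrization of $C$ in which the two points of $C\cap\chS(\{c_1,c_2\})$ are $u=0$ and $u=\infty$. Since each $R_i$ is a fixed-point-free projective involution of $C$ swapping these two points (the line through either of them and $c_i$ is the chord line itself), it must be $u\mapsto -k_i/u$ with $k_i>0$ and $k_1\neq k_2$, and your computation then goes through verbatim; in effect your argument identifies $R_2\circ R_1$ as a hyperbolic M\"obius map whose only fixed points on $C$ are the two chord endpoints. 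For comparison, the paper avoids coordinates entirely: it proves (i) by the half-plane switching you implicitly use, and (ii) by exhibiting a subarc $C_+^1$ of $C_+$ with $(R_1\circ R_2)(c)\in C_+^1$ and $(R_1\circ R_2)(C_+^1)\subset C_+^1$, so that the iterates never return to $c$. Once your normalization is repaired, your route gives a cleaner quantitative picture (the multiplier $\rho$), at the cost of the coordinate computation.
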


\begin{proof}
Let $C_+$ denote the intersection of $C$ with one of the open half-planes given by the line passing through $c_1$ and $c_2$, and let $C_-$ denote the intersection of $C$ with the other open half-plane given by the same line.
Then $C\setminus\chS(\{c_1,c_2\})=C_+\cup C_-$.

(i) If $m$ is odd, then it suffices to note that $(R_{i_m}\circ\dots \circ R_{i_1})(C_+)\subset C_-$ and $(R_{i_m}\circ\dots\circ R_{i_1})(C_-)\subset C_+$.

(ii) If $m$ is even, then $(R_{i_m}\circ\dots \circ R_{i_1})(C_+)\subset C_+$ and $(R_{i_m}\circ\dots\circ R_{i_1})(C_-)\subset C_-$.

Note that if $g=(i_1,\ldots,i_m)\in G_2\setminus\{\emptyset\}$ then $g$, being an irreducible configuration of elements of $\{1,2\}$, is a finite (nonempty) sequence of alternating $1$'s and $2$'s, ending with $1$ or $2$. 
Therefore, there exists $n\in\mathbb N$ such that either $R_{i_m}\circ\dots\circ R_{i_1}=(R_1\circ R_2)^n$ or $R_{i_m}\circ\dots\circ R_{i_1}=(R_2\circ R_1)^n$.
Note that the maps $(R_2\circ R_1)^n$ and $(R_1\circ R_2)^n$ are mutual inverses.
Thus the statements $(R_1\circ R_2)^n(c)\neq c$ and $(R_2\circ R_1)^n(c)\neq c$ are equivalent.
So, to complete the proof, it is enough to show that $(R_1\circ R_2)^n(c)\neq c$.

Assume that $c\in C_+$; the case $c\in C_-$ is analogous. 

Clearly, $(R_1\circ R_2)(c)\neq c$; otherwise we would have $R_2(c)=R_1(c)$, which implies that $c_1$, $c_2$ both lie 
on the line passing through $c$ and $R_1(c)$, and this is only possible if $c_1=c_2$ or $c\in \chS(\{c_1,c_2\})$, contradicting our assumptions.

The point $c$ splits the arc $C_+$ into two open subarcs, say $C_+^1$ and $C_+^2$. 
Let $C_+^1$ be the subarc that contains $(R_1\circ R_2)(c)$. 
From a picture, the reader may verify that
\begin{equation*}
(R_1\circ R_2)(C_+^1)\subset C_+^1.
\end{equation*}
Consequently, by an easy induction,
\begin{equation*}
(R_1\circ R_2)^{k-1}(C_+^1)\subset C_+^1\quad\text{for every } k\in\mathbb N
\end{equation*}
(where we use the convention that $(R_1\circ R_2)^0$ is the identity map).
Hence
\begin{equation*}
(R_1\circ R_2)^n(c)=(R_1\circ R_2)^{n-1}((R_1\circ R_2)(c))\in(R_1\circ R_2)^{n-1}(C_+^1)\subset C_+^1.
\end{equation*}
As $c\notin C_+^1$, the conclusion follows.
\end{proof}

\begin{theorem}\label{t:l=2}
Let $S,R$ be sets of the form $S=C\cup\{c_1,c_2\}$ and $R=D\cup\{d_1,d_2\}$, where $C,D$ are circles in the plane, $c_1,c_2$ are distinct points from the interior of $C$, and $d_1,d_2$ are distinct points from the interior of $D$. Then the sets $S$ and $R$ are betweenness isomorphic.
\end{theorem}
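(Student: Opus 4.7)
The plan is to verify the hypotheses of \Cref{c:characterization} with $\sigma$ being the identity permutation. First, I would identify the collinear hulls. Let $\ell_C$ denote the line through $c_1,c_2$, and let $x_1,x_2$ be the two intersection points of $\ell_C$ with $C$. Every pair of distinct points of $\{c_1,c_2,x_1,x_2\}$ spans $\ell_C$, and $\ell_C\cap S=\{c_1,c_2,x_1,x_2\}$, so this four-point set is collinearly closed in $S$; since $\{c_1,c_2\}$ itself is not, we obtain $\chS(\{c_1,c_2\})=\{c_1,c_2,x_1,x_2\}$. The analogous description $\chR(\{d_1,d_2\})=\{d_1,d_2,y_1,y_2\}$ holds, with $y_1,y_2$ the intersections of the line through $d_1,d_2$ with $D$.

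Next, I would construct $\tilde f$. After relabelling if necessary, we may assume that the orders along the respective lines are $x_1,c_1,c_2,x_2$ and $y_1,d_1,d_2,y_2$. The bijection $\tilde f\colon\chS(\{c_1,c_2\})\to\chR(\{d_1,d_2\})$ defined by $\tilde f(c_i)=d_i$ and $\tilde f(x_i)=y_i$ preserves the linear order along the line, and hence is a betweenness isomorphism of the two four-point collinear sets. The permutation $\sigma$ associated with $\tilde f$ is the identity, so $\psi_\sigma$ is the identity automorphism of $G_2$.

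Finally, I would verify condition \ref{P}. By \Cref{l:uperLowerFirst} applied to $S$ and to $R$, every stabilizer $\stab(c)$ with $c\in C\setminus\chS(\{c_1,c_2\})$ and every stabilizer $\stab(d)$ with $d\in D\setminus\chR(\{d_1,d_2\})$ is the trivial subgroup $\{\emptyset\}$. Hence each orbit in $\mathcal O_C$ and $\mathcal O_D$ is in bijection with the countably infinite group $G_2$, which forces $|\mathcal O_C|=|\mathcal O_D|=\mathfrak c$. Fix any bijection $h\colon\mathcal O_C\to\mathcal O_D$ and, for each $\orb\in\mathcal O_C$, arbitrary base points $x_\orb\in\orb$ and $y_\orb\in h(\orb)$. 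Since $\psi_\sigma$ is the identity, equation~\eqref{eq:stabs} reduces to $\{\emptyset\}=\{\emptyset\}$, which is automatic, and \Cref{c:characterization} completes the proof. The only substantive ingredient is \Cref{l:uperLowerFirst}; once trivial stabilizers are in hand, the remainder is bookkeeping with cardinalities together with the concrete order-matching choice of $\tilde f$.
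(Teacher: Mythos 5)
Your proposal is correct and follows essentially the same route as the paper's proof: identify the four-point collinear hulls, build an order-matching betweenness isomorphism $\tilde f$ with identity permutation $\sigma$, invoke \cref{l:uperLowerFirst} to get trivial stabilizers, and then use the cardinality-of-orbits argument to fix a bijection $h$ and apply \cref{c:characterization}. The only differences are cosmetic (you spell out the hull computation and the bijection of each orbit with $G_2$, which the paper leaves implicit).
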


\begin{proof}
In the case under consideration we have $\chS(\{c_1,c_2\})=\{c_1,c_2,x_1,x_2\}$, where $x_1,x_2\in C$ are distinct points, collinear with $c_1$ and $c_2$.
Similarly, $\chR(\{d_1,d_2\})$ consists of exactly four collinear points and two of them lying on the circle $D$. 
Therefore, it easily follows that there exists a betweenness isomorphism $\tilde f\colon\chS(\{c_1,c_2\})\to\chR(\{d_1,d_2\})$ such that $\tilde f(c_1)=d_1$ and $\tilde f(c_2)=d_2$; then the identity permutation on $\{1,2\}$ is the permutation $\sigma$ that corresponds to $\tilde f$ as in~\cref{tilde-fci}.

As all orbits $\orb \in \mathcal O_C$ and $\orb \in \mathcal O_D$ are obviously
at most
countable, the cardinality of each of the sets $\mathcal O_C$ and $\mathcal O_D$ is continuum.
We fix a bijection $h$ of $\mathcal O_C$ and $\mathcal O_D$. For every $\mathbf O\in\mathcal O_C$, we fix $x_{\mathbf O}\in\mathbf O$ and $y_{\mathbf O}\in h(\mathbf O)$ arbitrarily.
To complete the proof, by applying \Cref{c:characterization}, we only need to show two properties: $\stab(c)=\{\emptyset\}$ for every $c\in C\setminus\chS(\{c_1,c_2\})$, and $\stab(d)=\{\emptyset\}$ for every $d\in D\setminus\chR(\{d_1,d_2\})$.
The former property is equivalent to $c\cdot g\neq c$ for all $c\in C\setminus\chS(\{c_1,c_2\})$ and $g\in G_2\setminus\{\emptyset\}$, the latter property can be reformulated analogously.
So, it suffices to apply \Cref{l:uperLowerFirst}.
\end{proof}

\subsection{The case $l=3$}\label{subsec:l3}
This case is much more involved than that of $l\le 2$.
While we formulate some auxiliary results for general $l\ge 3$, our focus will be on the case $l=3$.

For $l\ge 3$, let $\mathcal K_l$ be the collection of all pairs $s=(C,(c_1,\ldots,c_l))$, where $C$ is a circle in the plane and $c_1,\ldots,c_l$ are pairwise distinct collinear points from the interior of $C$ such that either $c_1<c_2<\ldots<c_l$ or $c_l<c_{l-1}<\ldots<c_1$ in the natural order on the line passing through all these points. 

The main result of this section is that there are exactly countably many betweenness isomorphism classes of sets of the form $C\cup\{c_1,c_2,c_3\}$ where $(C,(c_1,c_2,c_3))\in\mathcal K_3$. Also, we will explicitly describe these isomorphism classes; see~\cref{t:isoclasses}.
This provides a partial answer to~\cref{prob1x} (for $3$ collinear points in the interior of the circle).

To formulate~\cref{t:isoclasses}, we start with some notation first. Let $l\ge 3$ and $g=(i_1,\ldots,i_n)\in G_l$ (we allow $n=0$ here; in that case, $g=\emptyset$). For every $i\in\{1,\ldots,l\}$, we put
\begin{equation*}
M_i=\{m\in\{1,\ldots,n\}:i_m=i\},
\end{equation*}
and
\begin{equation*}
N(g,i)=\sum_{m\in M_i}(-1)^{m+1};
\end{equation*}
in particular, $N(g,i)=0$ if $M_i=\emptyset$.
Then we define the \emph{signature} $N(g)$ of $g$ by
\begin{equation*}
N(g)=(N(g,1),\ldots,N(g,l))\in\mathbb Z^l.
\end{equation*}

We say that a vector $v=(v_1,\ldots,v_l)\in\mathbb Z^l$ is \emph{balanced} if $\sum_{i=1}^l v_i = 0$.

\begin{remark}\label{r:evenlenght}
Let $g\in G_l$. Then $\sum_{i=1}^lN(g,i)\in\{0,1\}$, and the signature $N(g)$ is a balanced vector (that is, $\sum_{i=1}^lN(g,i)=0$) if and only if (the irreducible sequence) $g$ is of even length.
\end{remark}

For a vector $v\in\mathbb Z^l\setminus \setof{\paren{0,\dots,0}}$, we denote by $\gcd(v)$ the greatest common divisor of all entries of $v$.

For every $s=(C,(c_1,\ldots,c_l))\in\mathcal K_l$, we define $\set(s)\subset\mathbb R^2$ by
\[\set(s)=C\cup\{c_1,\ldots,c_l\}.\]

Let $s=(C,(c_1,\ldots,c_l))\in\mathcal K_l$ be fixed.
We say that a vector $v\in\mathbb Z^l$ is a \emph{cycle} for $s$ if there are $g\in G_l$ and $c\in C\setminus\chu_{\set(s)}(\{c_1,\ldots,c_l\})$ such that $N(g)=v$ and $g\in\stab(c)$.
Later, we will see that, in this case, $g\in\stab(c)$ for every $g\in G_l$ with $N(g)=v$ and every $c\in C\setminus\chu_{\set(s)}(\{c_1,\ldots,c_l\})$ (see \cref{l:cyclePoint}).
This is the \emph{porism} phenomenon which makes the collinear case special, cf.~\cite{Kocik2013}.

For every $l\ge 3$ and every $v\in\mathbb Z^l$, we define
\begin{equation*}
[v]=\left\{s\in\mathcal K_l:v\text{ is a cycle for }s\right\},
\end{equation*}
and
\begin{equation*}
[[v]]=\left\{\set(s):s\in[v]\right\}.
\end{equation*}

Let $s\in\mathcal K_l$ be given.
Note that, while $\set(s)$ is a subset of the plane, $s$ itself is just a code for this subset.
If $l=3$, then each set of the form $C\cup\{c_1,c_2,c_3\}$, where $C$ is a circle in the plane and $c_1,c_2,c_3$ are pairwise distinct collinear points in the interior of $C$, has exactly two codes from $\mathcal K_3$.
If $c_2$ lies on the line segment connecting $c_1$ and $c_3$ (the two other cases are completely analogous), these codes are $(C,(c_1,c_2,c_3))$ and $(C,(c_3,c_2,c_1))$.

Now we are ready to formulate the main result of this section.

\begin{theorem}\label{t:isoclasses}
The following are all betweenness isomorphism classes of sets of the form $C\cup\{c_1,c_2,c_3\}$, where $C$ is a circle in the plane and $c_1,c_2,c_3$ are pairwise distinct collinear points in the interior of $C$:
\begin{align}
\label{eq:firstline}
&[[v]] \qquad \text{where }
v=(v_1,v_2,v_3)\in \mathbb Z^3\text{ is balanced with }
v_2 \ge 1, \ v_1 \le v_3 \le -1,\ \gcd(v) = 1,\\
\label{eq:secondline}
&\mathbb O \coloneqq
\left\{\set (s):s\in\mathcal K_3 \text{ is such that $(0,0,0)$ is the only cycle for }s\right\}.
\end{align}
Each of the classes is listed just once.
\end{theorem}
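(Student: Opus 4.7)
The plan is to apply \Cref{c:characterization}. The hull $\chS(\{c_1,c_2,c_3\})$ always consists of exactly five collinear points (the three interior points together with the two chord endpoints $x_1,x_2\in C$), so any betweenness isomorphism $\tilde f$ between two such five-point sets is essentially determined by a permutation; since codes in $\mathcal K_3$ are ordered so that $c_2$ is the middle interior point, the associated permutation $\sigma$ must satisfy $\sigma(2)=2$, hence $\sigma\in\{\mathrm{id},(1\,3)\}$. The classification therefore reduces to understanding the stabilizer structure of the $G_3$-action on $C\setminus\chS(\{c_1,c_2,c_3\})$.

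The key preliminary is the porism encoded in \cref{l:cyclePoint}. Let $\ell$ be the line through $c_1,c_2,c_3$; each reversion $R_i$ swaps the two points of $C\cap\ell=\{x_1,x_2\}$, so every product $R_iR_j$ lies in the one-parameter commutative group $\mathcal H$ of Möbius transformations of $C$ fixing $x_1$ and $x_2$. Writing the logarithm-of-multiplier isomorphism $\tau\colon\mathcal H\to\mathbb R$ and using the relation $\tau(R_iR_j)+\tau(R_jR_k)=\tau(R_iR_k)$ (a cocycle identity with trivial coboundary), one obtains $\tau(R_iR_j)=t_i-t_j$ for some parameters $t_1,t_2,t_3\in\mathbb R$. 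Grouping an even-length composition into consecutive pairs and telescoping yields, for every $g=(i_1,\ldots,i_n)\in G_3$ of even length,
\[\tau(R_{i_n}\circ\cdots\circ R_{i_1})=-\sum_{i=1}^{3}N(g,i)\,t_i.\]
Since $\mathcal H$ acts freely on $C\setminus\{x_1,x_2\}$, a nontrivial even-length $g$ stabilizes some (equivalently, every) $c\in C\setminus\chS(\{c_1,c_2,c_3\})$ if and only if the right-hand side vanishes; no odd-length $g$ stabilizes such a $c$ by the $l=3$ analogue of \Cref{l:uperLowerFirst}. Combining this with \cref{r:evenlenght} proves the porism and shows that the set of cycles $H_s$ equals the intersection of $\mathcal N=\{v\in\mathbb Z^3\colon v_1+v_2+v_3=0\}$ with the kernel of the linear form $v\mapsto\sum_iv_it_i$ on $\mathbb R^3$. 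Hence $H_s$ has rank $0$ or $1$: rank $2$ would force $t_1=t_2=t_3$, hence $R_iR_j=\mathrm{id}$ and $c_i=c_j$, contradicting distinctness.

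I would then assemble the classification. When $H_s=\{(0,0,0)\}$, every orbit in $\mathcal O_C$ has trivial stabilizer, so for two such $s,s'$ \Cref{c:characterization} gives a betweenness isomorphism by taking the obvious $\tilde f$ with $\sigma=\mathrm{id}$ and any bijection between $\mathcal O_C$ and $\mathcal O_D$ (both of continuum cardinality); this is the class $\mathbb O$ from~\eqref{eq:secondline}. When $H_s=\mathbb Z v$ with $v$ primitive balanced, two such pairs $s,s'$ yield betweenness isomorphic sets exactly when $H_{s'}$ is obtained from $H_s$ by a coordinate permutation $\psi_\sigma^*$ with $\sigma\in\{\mathrm{id},(1\,3)\}$. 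Combined with $v\leftrightarrow-v$, this gives the equivalence class $\{\pm v,\pm(v_3,v_2,v_1)\}$ of generators. The geometric ordering $x_1<c_1<c_2<c_3<x_2$ on $\ell$ translates into $t_1>t_2>t_3$ (or its reverse); eliminating $v_2=-v_1-v_3$ in $\sum_iv_it_i=0$ gives $v_1(t_1-t_2)=v_3(t_2-t_3)$, forcing $\sgn(v_1)=\sgn(v_3)$. Fixing this common sign to be negative (and hence $v_2\ge 1$) and ordering $v_1\le v_3$ via the swap $(1\,3)$ picks out the unique representative described in~\eqref{eq:firstline}. Distinctness and existence of isomorphisms within each class then follow from \Cref{c:characterization} with this choice of $\sigma$, $\tilde f$, and $h$.

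The main obstacle is the rigorous derivation of the linear formula $\tau(R_{i_n}\circ\cdots\circ R_{i_1})=-\sum_iN(g,i)t_i$, and in particular showing that the signature $N$, although not a homomorphism on all of $G_3$, descends to a well-defined homomorphism from the even-length words of $G_3$ to $\mathcal N$ that is invariant under the concatenation-and-reduction rule in~\eqref{e:Op}. A secondary difficulty is matching the sign convention $\sgn(v_1)=\sgn(v_3)\neq\sgn(v_2)$ and the ordering $v_1\le v_3$ precisely with~\eqref{eq:firstline}, which requires keeping careful track of orientations of $\mathcal H$ relative to the Euclidean order on $\ell$.
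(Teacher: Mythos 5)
Your route to the porism is genuinely different from the paper's and is sound in outline: instead of invoking Kocik's theorems (which underlie \cref{l:theSameSignatures,l:cyclePoint,l:plusMinus,l:multiple,l:kernel} and the necessity half of \cref{l:charNonempty}), you observe that every even-length composition of reversions lies in the one-parameter hyperbolic group fixing the two chord endpoints, so the cycle set is the set of balanced integer vectors in the kernel of $v\mapsto\sum_i v_it_i$. This buys the rank-$\le 1$/primitive-generator structure and, granting strict monotonicity of $c_i\mapsto t_i$ along the chord (true: two reversions compose to a translation along the chord by twice the hyperbolic distance, which is essentially what the paper extracts more laboriously from \cref{l:uperLowerFirst,l:uperLowerFirst3,l:only1choice}), also the sign pattern of \eqref{eq:firstline}. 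Your use of \Cref{c:characterization}, with the five-point hull forcing $\sigma\in\{\mathrm{id},(1\,3)\}$, matches the paper, and the two technical points you flag (reduction-invariance of the telescoped $\tau$-formula, orientation bookkeeping) are real but routine.

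The genuine gap is that you never prove the listed classes are non-empty, which the theorem requires: an empty $[[v]]$ or an empty $\mathbb O$ would not be a betweenness isomorphism class at all. Concretely missing are (i) for each balanced primitive $v$ with $v_2\ge 1$, $v_1\le v_3\le -1$, the existence of some $s\in\mathcal K_3$ having $v$ as a cycle (the paper's intermediate-value construction in \cref{l:charNonempty}), and (ii) the existence of a configuration whose only cycle is $(0,0,0)$, i.e.\ $\mathbb O\neq\emptyset$ (the paper's \cref{l:only1choice} plus a countable-avoidance argument). In your framework both are within reach — continuity, strict monotonicity and unboundedness of $t_3$ as $c_3$ moves toward the endpoint let you realize any positive rational ratio $(t_2-t_3)/(t_1-t_2)$, and an irrational ratio yields $\mathbb O\neq\emptyset$ — but no such argument appears in the proposal, so the classification statement as claimed is not fully established.
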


Let us recall the meaning of $[[v]]$ in~\eqref{eq:firstline}. Namely, if $v_1, v_3$ are negative integers with no common divisor, we let $v_2=\absof{v_1}+\absof{v_3}$ and $m=2 v_2$.
Then $[[v]]$ in~\eqref{eq:firstline} is the class of all sets $M\subset 
\mathbb R^2$ which may be written in the form $M=C\cup\{c_1,c_2,c_3\}$ where $c_1,c_2,c_3$ are
distinct collinear points in the interior of a circle $C$, such that there are
\begin{itemize}
    \item $c\in C$ not collinear with $c_1,c_2,c_3$,
    \item $(i_1,\dots,i_m)$ so that $i_j=2$ for all $j$ odd while on the even positions, $1$ appears $\absof{v_1}$ times and $3$ appears $\absof{v_3}$ times,
\end{itemize}
satisfying
\begin{equation}\label{e:CYCLE}   
\paren{P_{c_{i_m}}\circ P_{c_{i_{m-1}}}\circ\dots\circ P_{c_{i_2}} \circ P_{c_{i_1}}}(c)=c,
\end{equation}
where $P_{X}$ is the reversion map (through the point $X$) as defined in \cref{subsec:reversionETC}. 
(With the reference to \cref{l:cyclePoint} again, we note that if there are $c$ and $(i_1,\ldots,i_m)$ with the above property, then~\eqref{e:CYCLE} is actually true for all $c$ and $(i_1,\ldots,i_m)$ with the same property.)

\smallskip %

The rest of this section is devoted to the proof of~\cref{t:isoclasses}.

\begin{lemma}\label{l:even}
Let $l\ge 3$, $s=(C,(c_1,\ldots,c_l))\in\mathcal K_l$, $c\in C\setminus\chu_{\set(s)}(\{c_1,\ldots,c_l\})$ and $g\in G_l$ be given.
If $c\cdot g=c$ then the irreducible sequence $g$ is of even length.
\end{lemma}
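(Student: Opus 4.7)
The plan is to mimic the parity argument from \Cref{l:uperLowerFirst}(i), now exploiting the fact that all of $c_1,\ldots,c_l$ lie on a common line $\ell$. First I would isolate the two open arcs $C_+ = C\cap H_+$ and $C_- = C\cap H_-$, where $H_+,H_-$ are the two open half-planes cut out of the plane by $\ell$. The key preliminary observation is that
\[C\setminus\chu_{\set(s)}(\{c_1,\ldots,c_l\}) \;=\; C_+\cup C_-.\]
Indeed, $\chu_{\set(s)}(\{c_1,\ldots,c_l\})$ contains the two points of $C\cap\ell$ (each is collinear with, say, $c_1,c_2$), and once these are added no new collinear triple in $\set(s)$ can appear, since any point of $C$ outside $\ell$ is collinear with no two points of $\ell$. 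So the collinear hull is precisely $\{c_1,\ldots,c_l\}\cup(C\cap\ell)$.

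Next I would prove the central geometric fact: every reversion $R_i$ maps $C_+$ to $C_-$ and $C_-$ to $C_+$. This is immediate from the definition of $P_{c_i}$: for $c\in C_+\cup C_-$, the chord joining $c$ and $R_i(c)$ passes through $c_i\in\ell$, and $c_i$ lies strictly between $c$ and $R_i(c)$ (it is inside the circle). Since the segment from $c$ to $R_i(c)$ must therefore cross $\ell$, the endpoints $c$ and $R_i(c)$ lie in opposite open half-planes.

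From this it follows by an easy induction that for $g=(i_1,\ldots,i_n)\in G_l$, the composition $R_{i_n}\circ\cdots\circ R_{i_1}$ sends $C_+$ into $C_+$ (and $C_-$ into $C_-$) when $n$ is even, and sends $C_+$ into $C_-$ (and $C_-$ into $C_+$) when $n$ is odd. Therefore, if $c\in C\setminus\chu_{\set(s)}(\{c_1,\ldots,c_l\})=C_+\cup C_-$ satisfies $c\cdot g = c$, then $c$ and $(R_{i_n}\circ\cdots\circ R_{i_1})(c)$ lie in the same arc, which forces $n$ to be even. There is no real obstacle here; the only point requiring a touch of care is the identification of $C\setminus\chu_{\set(s)}(\{c_1,\ldots,c_l\})$ with $C_+\cup C_-$, which, as noted, uses in an essential way that $c_1,\ldots,c_l$ are collinear.
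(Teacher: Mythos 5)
Your proof is correct and follows essentially the same route as the paper's: the paper also argues that an odd-length $g$ sends $c$ to the opposite open half-plane determined by the line through $c_1,\ldots,c_l$, hence $c\cdot g\neq c$. Your additional identification of $C\setminus\chu_{\set(s)}(\{c_1,\ldots,c_l\})$ with the two open arcs $C_+\cup C_-$ is a correct (and implicitly used) detail, not a deviation.
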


\begin{proof}
If $g$ is of odd length then the points $c$ and $c\cdot g$ lie in different open half-planes given by the line passing through the points $c_1,\ldots,c_l$. In particular, $c\neq c\cdot g$.
\end{proof}

\begin{corollary}\label{cor:balanced}
Let $l\ge 3$ and $v\in\mathbb Z^l$ be such that $v$ is a cycle for some $s=(C,(c_1,\ldots,c_l))\in\mathcal K_l$.
Then $v$ is balanced.
\end{corollary}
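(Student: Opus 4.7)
The plan is to chain together the two immediately preceding results. By the definition of ``$v$ is a cycle for $s$'', we have some $g\in G_l$ with $N(g)=v$ and some $c\in C\setminus\chu_{\set(s)}(\{c_1,\ldots,c_l\})$ with $c\cdot g=c$. First I would invoke \Cref{l:even} on this pair $(c,g)$ to conclude that the irreducible sequence $g$ has even length. Then I would invoke \Cref{r:evenlenght}, which precisely asserts that $N(g)$ is a balanced vector exactly when $g$ has even length, to obtain that $v=N(g)$ is balanced.

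There is no real obstacle: the statement is essentially a reformulation combining \Cref{l:even} and \Cref{r:evenlenght}. The only thing worth emphasizing is that the hypothesis ``$c\notin\chu_{\set(s)}(\{c_1,\ldots,c_l\})$'' in the definition of a cycle is exactly what makes \Cref{l:even} applicable; this is precisely what rules out degenerate cases where $c\cdot g=c$ could happen for odd-length $g$. The argument fits in two sentences, so the writing choice is simply whether to present it as a short standalone proof or as a remark.
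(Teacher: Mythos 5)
Your proposal is correct and follows essentially the same route as the paper's proof: unpack the definition of a cycle to get $g\in G_l$ with $N(g)=v$ and $c\in C\setminus\chu_{\set(s)}(\{c_1,\ldots,c_l\})$ with $g\in\stab(c)$, apply \Cref{l:even} to get that $g$ has even length, and conclude via \Cref{r:evenlenght}. Your added remark that the hypothesis $c\notin\chu_{\set(s)}(\{c_1,\ldots,c_l\})$ is exactly what makes \Cref{l:even} applicable is accurate and matches the role it plays in the paper.
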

\begin{proof}
Find $s=(C,(c_1,\ldots,c_l))\in\mathcal K_l$
such that $v$ is a cycle for $s$.
Then there are $g\in G_l$ and $c\in C\setminus\chu_{\set(s)}(\{c_1,\ldots,c_l\})$ such that $N(g)=v$ and $g\in\stab(c)$.
By \cref{l:even}, $g$ is of even length.
Consequently, $v=N(g)$ is balanced by \Cref{r:evenlenght}.
\end{proof}

We say that a permutation $(\pi_1,\ldots,\pi_n)$ is \emph{admissible} if, for every $i\in\{1,\ldots,n\}$, the parity of $\pi_i$ is the same as the parity of $i$, i.e., $(-1)^{\pi_i}=(-1)^i$.

Let $l\ge 3$ be given. To every $g\in G_l$ we will assign $\overline g\in G_l$ with the following properties:
\begin{itemize}
\item for each $i\in\{1,\ldots,l\}$, $\overline g$ contains $i$ only at even positions, or only at odd positions (if any at all),
\item the elements of $\overline g$, separately on even positions and on odd positions, form a non-decreasing sequence,
\item $N(g)=N(\overline g)$,
\item $\overline g$ is obtained from $g$ only with the use of admissible permutations and reductions.
\end{itemize}

We proceed as follows.
First, using an admissible permutation of elements of $g$, we shift all instances of $l$ to the left (obeying even/odd positions,
while respecting the order of other elements on even positions, and those on odd positions). 
Then, we reduce the obtained sequence.
Observe that, in this case, the reduction simply means a `removal' of all pairs $(l,l)$ from the start
of the sequence.
We obtain a new sequence $g^1$ which contains $l$ only 
at even positions, or only at odd positions.
This finishes the first step of our construction. In the next steps, we consecutively repeat this procedure with elements of $\{1,\ldots,l-1\}$ in the reverse order, namely with $l+1-i$ in the $i$-th step.
Let $\overline g$ be the sequence obtained after the last (that is, after the $l$th) iteration.
Note that the elements of $\overline g$ on even positions got 
sorted during our procedure to a non-decreasing sequence, the same for the elements on odd positions.
As we only used admissible permutations followed by reductions, it easily follows that $\overline g$ is an irreducible sequence which has all the required properties.
Indeed, it is easy to see that admissible permutations followed by reductions
do not change the signature $N(\cdot)$.

The following lemma is almost obvious so we omit the proof.
Recall that $N(\overline g)=N(g)$
and observe that $N(g)$ determines the number of occurrences of
each element from $\{1,\ldots,l\}$ in the sequence $\overline g$, and its placement
to either even or odd positions.

\begin{lemma}\label{fact:signatures}
Let $l\ge 3$ and $g_1,g_2\in G_l$ be given such that $N(g_1)=N(g_2)$.
Then $\overline{g_1}=\overline{g_2}$.
\end{lemma}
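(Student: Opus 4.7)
The plan is to show that the signature $N(\overline{g})$ already determines $\overline{g}$ uniquely; since $N(g) = N(\overline{g})$ by construction and the hypothesis gives $N(\overline{g_1}) = N(\overline{g_2})$, the conclusion follows. The crucial point is that the canonical form $\overline{g}$ places every symbol at positions of a single parity, which makes $N(\overline{g},i)$ a \emph{signed} count rather than an ambiguous difference of two positive counts.

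First, I would formulate the reconstruction rule. For each $i \in \{1,\dots,l\}$, write $a_i$ and $b_i$ for the number of occurrences of $i$ in $\overline{g}$ at odd and even positions respectively, so $N(\overline{g},i) = a_i - b_i$. The construction of $\overline{g}$ guarantees $a_i b_i = 0$, hence
\begin{equation*}
a_i = \max\{N(\overline{g},i),0\}, \qquad b_i = \max\{-N(\overline{g},i),0\}.
\end{equation*}
Thus $N(\overline{g})$ tells us exactly which symbols appear, how many times, and at which parity of position: $i$ occurs only at odd positions if $N(\overline{g},i) > 0$, only at even positions if $N(\overline{g},i) < 0$, and not at all if $N(\overline{g},i) = 0$.

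Next, the non-decreasing property of $\overline{g}$ separately on odd and on even positions uniquely determines the two subsequences from the respective multisets. Interleaving them yields $\overline{g}$ in a canonical way. The only coherence check is that the total number of odd positions and even positions in $\overline{g}$ differ by $0$ or $1$; this is automatic, since $\sum_i a_i$ and $\sum_i b_i$ are the lengths of the odd-position and even-position subsequences, their sum is the length $n$ of $\overline{g}$, and by \Cref{r:evenlenght} we have $\sum_i N(\overline{g},i) = \sum_i a_i - \sum_i b_i \in \{0,1\}$.

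Applying this to $g_1, g_2$: the equality $N(g_1) = N(g_2)$ yields $N(\overline{g_1}) = N(\overline{g_2})$, so the multisets of odd- and even-position symbols in $\overline{g_1}$ and $\overline{g_2}$ coincide. Sorting each within its parity produces the same interleaved sequence, giving $\overline{g_1} = \overline{g_2}$. The main (minor) obstacle is simply being careful that the single-parity property of $\overline{g}$ is what makes the map $g \mapsto N(\overline{g})$ injective on the image of the canonicalization procedure; once this is articulated the rest is bookkeeping, which matches the authors' comment that the lemma is almost obvious.
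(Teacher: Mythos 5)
Your proof is correct and follows exactly the route the paper sketches (the paper omits the details, noting only that $N(g)=N(\overline g)$ determines the multiplicity of each symbol in $\overline g$ and its parity of position, which together with the sortedness on each parity class fixes $\overline g$). Your explicit reconstruction $a_i=\max\{N(\overline g,i),0\}$, $b_i=\max\{-N(\overline g,i),0\}$, using $a_ib_i=0$ and the sign convention that odd positions contribute $+1$, is precisely the bookkeeping the authors had in mind.
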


\begin{lemma}\label{l:admi+redu}
Let $l\ge 3$ and $s=(C,(c_1,\ldots,c_l))\in\mathcal K_l$
be given.
Let 
$\paren{i_1,\dots,i_n} \in \setof{1,\dots,l}^n$  with $n\in\mathbb N$.
\begin{enumerate}[label={\rm (\roman*)}]
\item If $\pi$ is an admissible permutation of\/ $\setof{1,\dots,l}$
then $R_{i_{\pi(n)}}\circ \cdots \circ R_{i_{\pi(1)}}
=R_{i_n}\circ \cdots \circ R_{i_1}$.
\item
If $m\in \mathbb N$ and $\paren{j_1,\dots,j_m}$
is obtained from $\paren{i_1,\dots,i_n}$
by reduction, then $R_{j_m}\circ \cdots \circ R_{j_1} =
R_{i_n}\circ \cdots \circ R_{i_1}$.
\end{enumerate}
\end{lemma}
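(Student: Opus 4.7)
The plan is to dispose of (ii) immediately and put the weight on (i), whose content reduces to a single three-term identity for reversions. For (ii), a single elementary reduction step removes a pair $(i_k,i_{k+1})$ with $i_k=i_{k+1}$; in the composition $R_{i_n}\circ\cdots\circ R_{i_1}$ the corresponding factor $R_{i_{k+1}}\circ R_{i_k}=\mathrm{id}$ cancels, leaving the full composition unchanged, and iterating this handles the general reduction.

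For (i), since an admissible permutation of $\{1,\ldots,n\}$ preserves the parity of each index, it separately permutes the odd positions and the even positions of $(i_1,\ldots,i_n)$. Such permutations are generated by the transpositions swapping positions $k$ and $k+2$ (for $1\le k\le n-2$), and applying such a transposition to the composition only replaces the subcomposition $R_{i_{k+2}}\circ R_{i_{k+1}}\circ R_{i_k}$ by $R_{i_k}\circ R_{i_{k+1}}\circ R_{i_{k+2}}$. So the whole assertion reduces to the identity
\begin{equation*}
R_x\circ R_y\circ R_z=R_z\circ R_y\circ R_x,\qquad x,y,z\in\{c_1,\ldots,c_l\},
\end{equation*}
in which the collinearity of $c_1,\ldots,c_l$ (guaranteed by $s\in\mathcal K_l$) is essential.

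To prove this identity I would first dispose of the degenerate cases $x=y$, $y=z$, or $x=z$ by direct cancellation using $R_\cdot\circ R_\cdot=\mathrm{id}$. Otherwise, let $\ell$ be the common line containing $c_1,\ldots,c_l$ and let $p,q$ be its two intersections with $C$. Every reversion through a point of $\ell$ swaps $p$ and $q$, hence so does $T:=R_z\circ R_y\circ R_x$. Now each $R_{c_i}$ is a projective (Möbius) involution of the conic $C\cong\mathbb{RP}^1$, by Desargues' involution theorem applied to the pencil of lines through $c_i$; in particular $T$ is a Möbius transformation swapping two distinct points. After a projective change of coordinates sending $p,q$ to $0,\infty$, such a transformation takes the form $t\mapsto k/t$ and is therefore an involution. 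Hence $T^{-1}=T$, which combined with $T^{-1}=R_x\circ R_y\circ R_z$ (each $R_{c_i}$ being an involution) yields the desired identity.

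The main obstacle is precisely this three-term identity, and specifically the projective-geometric step identifying reversions with Möbius transformations of $C$. This step could alternatively be replaced by a direct coordinate calculation, or by a hyperbolic-geometry argument in the Klein model (where $R_{c_i}$ becomes the half-turn around $c_i$, collinearity of the $c_i$'s translates into the rotation axes sharing a common perpendicular, and the three-reflections theorem applies); but some non-trivial input beyond the definitions seems unavoidable here.
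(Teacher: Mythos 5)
Your proof is correct, and its skeleton coincides with the paper's: part (ii) is just involutivity of reversions, and part (i) is reduced, via transpositions of positions of equal parity, to the single identity $R_x\circ R_y\circ R_z=R_z\circ R_y\circ R_x$ for reversions through the (collinear) points. Where you genuinely diverge is in how that three-term identity is established. The paper simply quotes \cite[Theorem~4]{Kocik2013}: the composition of three reversions is again a reversion, hence an involution, and multiplying $(R_z\circ R_y\circ R_x)^2=\mathrm{Id}_C$ on the right by $R_x\circ R_y\circ R_z$ gives the identity. You instead prove involutivity of $T=R_z\circ R_y\circ R_x$ directly: each reversion is a projective (M\"obius) involution of the conic $C\cong\mathbb{RP}^1$, each reversion through a point of the common line $\ell$ swaps the two points $p,q$ of $C\cap\ell$, so $T$ is a M\"obius map interchanging $p$ and $q$ and is therefore of the form $t\mapsto k/t$ in suitable coordinates, hence an involution. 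This buys a more self-contained argument (your only external input is the classical fact that the reversion with a given center is a projective involution of the conic, versus the paper's reliance on Kocik's theorem), and it makes visible exactly where collinearity enters; your remark that it is essential is accurate --- for non-collinear interior centers the triple product is, in the Klein-model picture, a half-turn composed with a translation along a line not through its center, which is not an involution, so the identity genuinely fails there, whereas the paper's proof hides this in the citation. The price is that you must justify the projectivity of reversions (Desargues/Fr\'egier, a coordinate computation, or your hyperbolic three-reflections variant), an input of comparable weight to the paper's citation; with that granted, your argument is complete.
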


\begin{proof}
As reversions are involutive, assertion (ii) follows.
To prove assertion (i) let us first assume that $i,j,k\in\{1,\ldots,l\}$
are
given.
By~\cite[Theorem~4]{Kocik2013}, the composition $R_{i}\circ R_{j}\circ R_{k}$ is a reversion (through some point $X\in\mathbb R^2\setminus C$).
As every reversion is
an involution, we have
\begin{equation*}
(R_{k}\circ R_{j}\circ R_{i})\circ (R_{k}\circ R_{j}\circ R_{i})= \mathrm{Id}_C,
\end{equation*}
where $\mathrm{Id}_C$ is the identity map on the circle $C$.
Multiplying by $R_{i}\circ R_{j}\circ R_{k}$
from the right hand side, we obtain
\begin{equation*}
R_{k}\circ R_{j}\circ R_{i}=R_{i}\circ R_{j}\circ R_{k}.
\end{equation*}

From this we see that interchanging of
two entries of a sequence on consecutive even positions
does not change
the resulting reversion composition map;
the same is true for consecutive odd positions.
As any permutation can be obtained by a sequence of swapping of neighbouring elements, we reach the conclusion for all admissible permutations.
\end{proof}

\begin{lemma}\label{l:theSameSignatures}
Let $l\ge 3$, $s=(C,(c_1,\ldots,c_l))\in\mathcal K_l$ and $c\in C$ be given.
Let $g_1,g_2\in G_l$ be such that $N(g_1)=N(g_2)$.
Then
\begin{equation*}
g_1\in\stab(c)\iff g_2\in\stab(c).
\end{equation*}
\end{lemma}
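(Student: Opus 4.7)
The plan is to use the canonical form $\overline{g}$ constructed before \cref{fact:signatures} as a bridge between $g_1$ and $g_2$, together with the invariance properties recorded in \cref{l:admi+redu}.

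First, since $N(g_1)=N(g_2)$, \cref{fact:signatures} immediately yields $\overline{g_1}=\overline{g_2}$.

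Next, recall that $\overline{g_i}$ is produced from $g_i$ by a finite sequence of admissible permutations and reductions applied to intermediate tuples in $\{1,\ldots,l\}^{n}$ (which in general need not be irreducible, but this is not an issue, since \cref{l:admi+redu} is stated for arbitrary tuples). By \cref{l:admi+redu}, each such operation leaves the corresponding composition $R_{i_n}\circ\cdots\circ R_{i_1}$ unchanged. Iterating through the entire construction, we conclude that the reversion map associated to $g_1$ coincides with the one associated to $\overline{g_1}$, and analogously for $g_2$ and $\overline{g_2}$. Combined with $\overline{g_1}=\overline{g_2}$, this shows that the reversion composition maps coming from $g_1$ and from $g_2$ are identical on the whole circle $C$. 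In particular $c\cdot g_1=c\cdot g_2$, from which the required equivalence $g_1\in\stab(c)\iff g_2\in\stab(c)$ is immediate.

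No genuine obstacle arises here; the proof is essentially a direct assembly of \cref{fact:signatures,l:admi+redu}. The only point meriting a moment of attention is verifying that \cref{l:admi+redu} applies to the (possibly non-irreducible) intermediate sequences that appear during the construction of $\overline{g}$, which is exactly the generality in which it is formulated.
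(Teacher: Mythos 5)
Your proof is correct and follows essentially the same route as the paper: apply \cref{fact:signatures} to get $\overline{g_1}=\overline{g_2}$, then use \cref{l:admi+redu} to see that passing from $g$ to $\overline g$ does not change the associated composition of reversions, hence $c\cdot g_1=c\cdot \overline{g_1}=c\cdot \overline{g_2}=c\cdot g_2$. Your extra remark that \cref{l:admi+redu} is stated for arbitrary (possibly reducible) tuples, so it covers the intermediate sequences in the construction of $\overline g$, is a fine observation but not a divergence from the paper's argument.
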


\begin{proof} 
Since $\overline g$ is obtained by admissible permutations and reductions, we have $c\cdot g=c\cdot \overline g$
for any $g\in G_l$
by \cref{l:admi+redu}.
As $N(g_1)=N(g_2)$, we have $\overline g_1 = \overline g_2$
by \cref{fact:signatures}.
Hence
$c\cdot g_1 = c\cdot \overline g_1 = c \cdot \overline g_2=
c\cdot g_2$.
Therefore $g_1\in \stab(c)$ if and only if $g_2\in\stab(c)$.
\end{proof}

\begin{lemma}\label{l:cyclePoint}
Let $s=(C,(c_1,\ldots,c_l))\in\mathcal K_l$ and $v\in\mathbb Z^l$ be fixed.
Suppose that $v$ is a cycle for $s$.
Then $g\in\stab(c)$ for every $g\in G_l$ with $N(g)=v$ and every $c\in C\setminus\chu_{\set(s)}(\{c_1,\ldots,c_l\})$.
\end{lemma}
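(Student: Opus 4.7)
Let $\ell$ be the line through $c_1,\ldots,c_l$ and let $\{x_1,x_2\}=C\cap\ell$, so that $\chS(\{c_1,\ldots,c_l\})=\{c_1,\ldots,c_l,x_1,x_2\}$. Fix $g=(i_1,\ldots,i_n)\in G_l$ with $N(g)=v$ and an arbitrary $c\in C\setminus\chS(\{c_1,\ldots,c_l\})$; we have to show $c\cdot g=c$. Since $v$ is a cycle for $s$, there are $g_0\in G_l$ and $c_0\in C\setminus\chS(\{c_1,\ldots,c_l\})$ with $N(g_0)=v$ and $g_0\in\stab(c_0)$. From $N(g)=N(g_0)$ and \Cref{l:theSameSignatures} we get $g\in\stab(c_0)$, so it suffices to prove the following porism-type statement: if $\phi:=R_{i_n}\circ\cdots\circ R_{i_1}$ fixes one point of $C\setminus\chS(\{c_1,\ldots,c_l\})$, then $\phi$ is the identity on the whole of $C$.

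From $\phi(c_0)=c_0$, $c_0\notin\chS(\{c_1,\ldots,c_l\})$ and \Cref{l:even}, the length $n$ is even. Each reversion $R_i$ sends $x_1$ to the second intersection of $\ell$ with $C$, namely $x_2$, and symmetrically; hence every $R_i$ swaps $x_1$ and $x_2$, and since $n$ is even we also have $\phi(x_1)=x_1$ and $\phi(x_2)=x_2$. As $c_0\notin\{x_1,x_2\}$, the map $\phi$ exhibits three distinct fixed points on $C$.

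The proof is then concluded by observing that $\phi$ is a projective (Möbius) self-map of $C$. Concretely, identify $C$ with $\mathbb{RP}^1$ via stereographic projection from some fixed point of $C$; under this identification each reversion $R_i$ becomes a fractional linear involution of $\mathbb{RP}^1$, so the composition $\phi$ is itself fractional linear. A Möbius transformation of $\mathbb{RP}^1$ with three distinct fixed points must be the identity, so $\phi=\mathrm{id}_C$, and therefore $c\cdot g=\phi(c)=c$ as required. The main technical point, and the expected main obstacle, is justifying the projective description of each $R_i$; this may be carried out either by a direct computation with an explicit rational parameterization of $C$, or by recognizing $R_i$ as the restriction to the conic $C$ of an involutive projectivity of the ambient plane, along the lines of~\cite{Kocik2013}.
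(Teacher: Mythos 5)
Your proposal is correct, and its opening reduction is exactly the paper's: use \Cref{l:theSameSignatures} to transfer the stabilizing property from the witnessing pair $(g_0,c_0)$ to the given $g$, then invoke \Cref{l:even} to get even length, so that everything hinges on the porism statement that a composition of an even number of reversions through the collinear points which fixes one point of $C\setminus\chS(\{c_1,\ldots,c_l\})$ fixes every point. Where you diverge is in how that porism is established: the paper simply cites Theorem~7 of \cite{Kocik2013} (with a footnote correcting a typo in its hypothesis), whereas you prove it from scratch by projective geometry — each $R_i$ swaps the two points of $C\cap\ell$, so an even composition $\phi$ fixes them, and together with $c_0$ this gives three distinct fixed points of a projectivity of the conic, forcing $\phi=\mathrm{id}_C$. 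This buys a self-contained argument (no reliance on, or need to repair, the external statement) and in fact yields the stronger conclusion that $\phi$ is the identity on all of $C$; the cost is the one step you flag but leave as a sketch, namely that each reversion is fractional linear on $C\cong\mathbb{RP}^1$. That step is genuinely classical and easily completed along either route you name: the reversion through $X$ is the restriction to $C$ of the harmonic homology with center $X$ and axis the polar line of $X$ (by the harmonic characterization of the polar), or, computationally, for the unit circle and $X=(a,0)$ the tangent-half-angle parameters $u,v$ of a point and its image satisfy $uv=\frac{a-1}{a+1}$, a fractional linear relation. With that paragraph added, your argument is a complete and valid alternative to the paper's citation-based proof.
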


\begin{proof}
As $v$ is a cycle for $s$, there are $g'\in G_l$ and $c'\in C\setminus\chu_{\set(s)}(\{c_1,\ldots,c_l\})$ such that $N(g')=v$ and $g'\in\stab(c')$.
Fix arbitrary $g\in G_l$ with $N(g)=v$.
By \cref{l:theSameSignatures}, as $N(g')=v=N(g)$, we have $g\in\stab(c')$.
So, to complete the proof, it is enough to show that
$\stab(c)=\stab(c')$ for every $c,c'\in C\setminus\chu_{\set(s)}(\{c_1,\ldots,c_l\})$.
Recall that if $g\in\stab(c)$ for some $c\in C\setminus\chS(\{c_1,\ldots,c_l\})$ then $g$ is of even length by \cref{l:even}.
So it suffices to apply ~\cite[Theorem~7]{Kocik2013}\footnote{Note that there is a typo in the statement of Theorem~7 in \cite{Kocik2013}. Namely, the premise `$\exists X\in K,\;\mathbf P(X)=X$' of the implication in~\cite[condition~(11)]{Kocik2013} should correctly state `$\exists X\in K\setminus L,\;\mathbf P(X)=X$', where $L$ is the line passing through the (collinear) points $\mathbf P_1,\ldots,\mathbf P_{2n}$; cf.\cite{Bogomolny1997}}.
\end{proof}

\begin{lemma}\label{l:plusMinus}
Let $l\ge 3$, $s=(C,(c_1,\ldots,c_l))\in\mathcal K_l$ and $v^1,v^2\in\mathbb Z^l$ be given such that $v^1,v^2$ are cycles for $s$.
Then $-v^1$, $v^1+v^2$ and $v^1-v^2$ are also cycles for $s$.
\end{lemma}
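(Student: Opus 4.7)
The plan is to fix a point $c \in C \setminus \chu_{\set(s)}(\{c_1,\ldots,c_l\})$ and apply \cref{l:cyclePoint} to obtain $g_1, g_2 \in \stab(c)$ with $N(g_i) = v^i$ for $i=1,2$. By \cref{l:even}, each $g_i$ has even length. Since $\stab(c)$ is a subgroup of $G_l$, all of $g_1^{-1}$, $g_1 g_2$, and $g_1 g_2^{-1}$ lie in $\stab(c)$, so it suffices to identify their signatures.

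I would first establish two elementary properties of $N$. \textbf{(a)} If $g_1=(i_1,\ldots,i_n)$ has \emph{even} length and $g_2=(j_1,\ldots,j_m)$ is arbitrary, then the plain concatenation $g_1 \ast g_2 := (i_1,\ldots,i_n,j_1,\ldots,j_m)$ satisfies $N(g_1 \ast g_2) = N(g_1) + N(g_2)$. This is immediate from the definition of $N$: the position of $j_q$ in $g_1 \ast g_2$ is $n+q$, and $(-1)^{(n+q)+1}=(-1)^{q+1}$ when $n$ is even. \textbf{(b)} Reduction preserves the signature. It is enough to check one reduction step: deleting an adjacent equal pair $i_k=i_{k+1}$. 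The deleted pair contributes $(-1)^{k+1}+(-1)^{k+2}=0$ to the signature entry of $i_k$, and each subsequent position shifts by $2$, so $(-1)^{m+1}$ remains unchanged for the entries after the deleted pair. Iterating, the group product $gg'$ (which is by definition the full reduction of $g \ast g'$) has the same signature as $g \ast g'$.

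With (a) and (b) in hand, each claim becomes a one-line computation. For \textbf{$-v^1$}: the reduction of $g_1 \ast g_1^{-1}$ is the identity $\emptyset$, so by (b) we have $N(g_1 \ast g_1^{-1}) = N(\emptyset) = (0,\ldots,0)$, and by (a) this equals $N(g_1) + N(g_1^{-1})$; hence $N(g_1^{-1}) = -v^1$. For \textbf{$v^1+v^2$}: by (b), $N(g_1 g_2) = N(g_1 \ast g_2)$, and by (a) (using that $g_1$ has even length) this equals $N(g_1) + N(g_2) = v^1 + v^2$. For \textbf{$v^1-v^2$}: since $g_2$ has even length, $g_2^{-1}$ has even length, so $g_1 g_2^{-1}$ lies in $\stab(c)$ with signature $v^1 + (-v^2) = v^1 - v^2$ by the same reasoning.

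The main subtlety lies in property (a): the additivity of $N$ under concatenation \emph{only} holds when the first factor has even length, because otherwise the shift $(-1)^n$ would flip the signs of all contributions from the second factor. This is precisely where \cref{l:even} is essential, and it is the reason the lemma is restricted to points $c$ outside the collinear hull. Property (b) is almost tautological, but it is what allows us to pass freely between signatures of formal concatenations and signatures of the reduced group products, which is crucial since cycles are defined in terms of the group action (i.e., via reduced words).
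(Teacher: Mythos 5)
Your proof is correct and takes essentially the same route as the paper: fix $c\in C\setminus\chu_{\set(s)}(\{c_1,\ldots,c_l\})$, use \cref{l:cyclePoint} and \cref{l:even} to get even-length elements $g_1,g_2\in\stab(c)$ with $N(g_i)=v^i$, and track signatures of inverses and products inside the subgroup $\stab(c)$. The only (harmless) differences are that you spell out the signature-additivity and reduction-invariance facts that the paper dismisses as ``easily deduced,'' and that you witness $v^1-v^2$ directly by $g_1g_2^{-1}$ instead of combining the $-v^2$ and sum cases.
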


\begin{proof} By assumptions there are $g_1\in G_l$ and $c\in C\setminus\chu_{\set(s)}(\{c_1,\ldots,c_l\})$ such that $N(g_1)=v^1$ and $g_1\in\stab(c)$. Then $g_1^{-1}\in\stab(c)$.
By \cref{l:even}, the sequence $g_1$ is of even length. Hence we easily deduce
that $N(g_1^{-1})=-v^1$. 
So, $g_1^{-1}$ and $c$ witness that $-v^1$ is a cycle for $s$.

Let $g_2\in G_l$ be such that $N(g_2)=v^2$.
By \cref{l:cyclePoint}, we have $g_2\in\stab(c)$.
Clearly, $N(g_1g_2)=v^1+v^2$ (here, we use again the fact that $g_1$ is of even length) and $c\cdot g_1g_2=c\cdot g_2=c$.
So, $g_1g_2$ and $c$ witness that $v^1+v^2$ is a cycle for $s$.

To see that $v^1-v^2$ is a cycle for $s$ it suffices to note that $v^1-v^2=v^1+(-v^2)$ and apply what we already proved.
\end{proof}

\begin{lemma}\label{l:multiple}
Let $l\ge 3$, $s=(C,(c_1,\ldots,c_l))\in\mathcal K_l$, $v\in\mathbb Z^l$, and $k\in\mathbb Z\setminus\{0\}$ be given.
Then $v$ is a cycle for $s$ if and only if $kv$ is a cycle for $s$.
\end{lemma}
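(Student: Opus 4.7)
The plan is to prove the two implications separately. The forward direction, that $v$ being a cycle implies $kv$ is a cycle, follows by iterating \cref{l:plusMinus}: if $v$ is a cycle, then so is $v+v=2v$ and, by induction, $nv$ for every $n\in\mathbb N$; since $-v$ is also a cycle by \cref{l:plusMinus}, $-nv$ is a cycle for every $n$; and $0=v+(-v)$ is a cycle. Hence $kv$ is a cycle for every $k\in\mathbb Z$.

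For the reverse direction, I start by noting that if $v$ is not balanced, then $kv$ is not balanced either (as $k\neq 0$), contradicting \cref{cor:balanced}; so I may assume $v$ is balanced. I then fix a representative $h\in G_l$ with $N(h)=v$ (such an $h$ can be produced explicitly by placing the positive coordinates on the odd positions and the negative ones on the even positions, in a suitably alternating order) and define $\phi\colon C\to C$ by $\phi(c)=c\cdot h$. Since $h$ has even length (\cref{r:evenlenght}), the concatenation $h^k$ has signature $kv$, so by \cref{l:theSameSignatures} the hypothesis that $kv$ is a cycle is equivalent to $\phi^k(c)=c$ for every $c\in C\setminus\chu_{\set(s)}(\{c_1,\ldots,c_l\})$.

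The crucial structural fact is that $\phi$ is a projective (M\"obius) transformation of the conic $C\cong\mathbb{RP}^1$ fixing the two points $\{p,q\}=L\cap C$ (where $L$ is the common line of $c_1,\ldots,c_l$) and preserving each of the two arcs of $C\setminus\{p,q\}$. Indeed, each reversion $R_i$ is a projective involution of the conic that swaps $p$ and $q$ (the line through $c_i$ and $p$ coincides with $L$, whose other intersection with $C$ is $q$), so any composition of an even number of reversions fixes $p,q$ and carries each arc into itself. In projective coordinates with $p=0$ and $q=\infty$, $\phi$ therefore has the form $x\mapsto\lambda x$ with $\lambda>0$ (positivity from arc-preservation). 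The identity $\phi^k=\mathrm{id}$ on an infinite set forces $\lambda^k=1$, whence $\lambda=1$ and $\phi=\mathrm{id}$ on all of $C$. In particular $h\in\stab(c)$ for every $c\in C\setminus\chu_{\set(s)}(\{c_1,\ldots,c_l\})$, so $v$ is a cycle.

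The main obstacle is the projective-geometric identification of $\phi$ with $x\mapsto\lambda x$: this rests on the classical fact that a conic in the real projective plane carries a natural $\mathbb{RP}^1$-structure under which reversions through interior points are projective involutions. An equivalent packaging, perhaps preferable in view of earlier references, is via hyperbolic geometry: in the Klein disk model of the interior of $C$, each $R_i$ is the hyperbolic central symmetry at $c_i$, and any pair $R_i\circ R_j$ (with $c_i,c_j\in L$) is a hyperbolic translation along $L$. Such translations form a torsion-free abelian subgroup of the isometries, from which the implication $\phi^k=\mathrm{id}\Rightarrow\phi=\mathrm{id}$ is immediate.
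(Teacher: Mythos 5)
Your proof is correct, and while the easy direction (iterating \cref{l:plusMinus}) matches the paper in substance, your argument for the hard direction takes a genuinely different route. The paper stays combinatorial: it normalizes the witness to $\overline g$, builds $h$ with $h^k$ an admissible permutation of $\overline g$, then invokes Kocik's Theorem~3 to collapse the first $2n-1$ reversions into a single reversion $P_Y$ and uses the two-point \cref{l:uperLowerFirst} to force $Y=c_{i_{2n}}$, hence $c\cdot h=c$. You instead exploit the transformation-group structure: since all centers lie on the common line $L$, each $R_i$ is a projective involution of the conic swapping the two points of $L\cap C$ (equivalently, a hyperbolic half-turn in the Klein model), so the even composition $\phi(c)=c\cdot h$ is a translation along $L$, i.e.\ $x\mapsto\lambda x$ with $\lambda>0$ in coordinates fixing $p=0$, $q=\infty$; torsion-freeness then turns $\phi^k(c)=c$ at a single point off $L$ into $\phi=\mathrm{id}$. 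This is conceptually cleaner and more robust (it needs only one fixed point, and it would also re-derive the porism statement \cref{l:cyclePoint} rather than use it), but it imports classical projective/hyperbolic facts that the paper does not otherwise establish, whereas the paper's proof runs entirely on tools it has already set up (Kocik's theorems plus \cref{l:uperLowerFirst}). Two small points of bookkeeping: your claim that the hypothesis is \emph{equivalent} to $\phi^k(c)=c$ for \emph{every} $c$ off the line needs \cref{l:cyclePoint}, not just \cref{l:theSameSignatures} (though the single witness point that \cref{l:theSameSignatures} provides already suffices for your argument); and your explicit construction of $h$ is fine because odd positions carry indices with $v_i>0$ and even positions indices with $v_i<0$, so irreducibility is automatic and $N(h)=v$, with $v$ balanced thanks to \cref{cor:balanced} applied to $kv$.
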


\begin{proof}
Without loss of generality, we may assume that $k>0$ because, by \cref{l:plusMinus}, $kv$ is a cycle for $s$ if and only if $-kv$ is a cycle for $s$.

For $k=1$, there is nothing to prove. So, we assume that $k\geq 2$.

Suppose first that $v$ is a cycle for $s$.
Then there are $g\in G_l$ and $c\in C\setminus\chu_{\set(s)}(\{c_1,\ldots,c_l\})$ such that $N(g)=v$ and $g\in\stab(c)$;
the sequence $g$ is of even length by \cref{l:even}.
So $g^k\in G_l$ (the concatenation of $k$ copies of $g$, reduced if necessary) clearly satisfies $N(g^k)=kN(g)=kv$.
Also, $g^k\in\stab(c)$ (as $g\in\stab(c)$ and each stabilizer is a subgroup).
Thus $kv$ is a cycle for $s$.

Now assume that $kv$ is a cycle for $s$.
Then there are $g\in G_l$ and $c\in C\setminus\chu_{\set(s)}(\{c_1,\ldots,c_l\})$ such that $N(g)=kv$ and $g\in\stab(c)$.
Recall that $N(\overline g)=N(g)$. So, by \cref{l:theSameSignatures}, we may assume without any loss of generality that $g=\overline g$.
In particular, for each $i\in\{1,\ldots,l\}$, $g=\overline{g}$ contains $i$ only at even positions, or only at odd positions. Also, as the signature of $g=\overline{g}$ is divisible by $k$, each $i\in\{1,\ldots,l\}$ occurs at a multiple of $k$ positions in $g$.
As $kv$ is a cycle, it is balanced by \cref{cor:balanced}; so $v$ is balanced, too.
This implies that $\sum\absof{v_i}$
is an even number.
Fix some $h\in G_l$ such that, for every $i\in\{1,\ldots,l\}$,
\begin{itemize}
\item the sequence $h$ has $i$ exactly on $|v_i|$ positions,
\item if $g=\overline g$ contains $i$ only at even positions, then $h$ also contains $i$ only at even positions,
\item if $g=\overline g$ contains $i$ only at odd positions, then $h$ also contains $i$ only at odd positions.
\end{itemize}
Then $h$ is of even length (as the length of $h$ equals $\sum\absof{v_i}$).
Hence it is clear that $h^k$ is an irreducible sequence which can be obtained from $g=\overline g$ by an admissible permutation.
As admissible permutations do not change signatures, it still holds
\begin{equation}\label{e:poradToPlati}
h^k\in\stab(c),
\end{equation}
by \cref{l:theSameSignatures}.

As $h$ is of even length, we have $kN(h)=N(h^k)$ and 
\begin{equation}\label{e:h}
h=(i_1,\ldots,i_{2n})
\end{equation}
for some non-negative $n\in\mathbb Z$. We see that
\begin{equation*}
kN(h)=N(h^k)=N(\overline g)=N(g)=kv.
\end{equation*}
In particular,
\begin{equation*}
N(h)=v.
\end{equation*}

If $n=0$ then $h=\emptyset$, and so $v=N(h)=(0,\ldots,0)\in\mathbb Z^l$.
In this case, $\emptyset\in G_l$ and any $c\in C\setminus\chu_{\set(s)}(\{c_1,\ldots,c_l\})$ trivially witness that $v$ is a cycle.
So we may assume that $n>0$.

By~\cite[Theroem~3]{Kocik2013}, for every $j_1,j_2,j_3\in\{1,\ldots,l\}$, the composition
$R_{j_3}\circ R_{j_2}\circ R_{j_1}$ is a reversion through some point lying in the interior of the circle $C$ and collinear with points $c_1,\ldots,c_l$.
One can easily conclude by induction on $m$ that, for every 
$j_1,\ldots,j_{2m-1}\in\{1,\ldots,l\}$, the composition 
$R_{j_{2m-1}}\circ\cdots\circ R_{j_1}$ is a reversion through some point, which lies 
in the interior of the circle $C$ and which is collinear with $c_1,\ldots,c_l$.
It follows that there is some $Y$ in the interior of $C$ such that, denoting by $P_Y$ the reversion map through $Y$, we have
\begin{equation}\label{e:popisReversema}
R_{i_{2n}}\circ\cdots\circ R_{i_1}=R_{i_{2n}}\circ P_Y,
\end{equation}
where $i_1,\ldots,i_{2n}$ are given by~\eqref{e:h}.
Consequently, it holds
\begin{equation}\label{e:dveReverze}
c\stackrel{\eqref{e:poradToPlati}}{=}c\cdot h^k\stackrel{\eqref{e:popisReversema}}{=}(R_{i_{2n}}\circ P_Y)^k(c).
\end{equation}
However, by \cref{l:uperLowerFirst}, this is possible only if $Y=c_{i_{2n}}$.
Then
$R_{i_{2n}}\circ P_Y$ is the identity map on the circle $C$.
Consequently, we have
\[c\cdot h\stackrel{\eqref{e:h}}{=}(R_{i_{2n}}\circ\cdots\circ R_{i_1})(c)\stackrel{\eqref{e:popisReversema}}{=}(R_{i_{2n}}\circ P_Y)(c)=c,\]
and so $h$ and $c$ witness that $v$ is a cycle for $s$.
\end{proof}

We need the following generalization of \cref{l:uperLowerFirst}.

\begin{lemma}\label{l:uperLowerFirst3}
Let $C$ be a circle in the plane and $c_1,c_2,c_3$ be pairwise distinct points from the interior of $C$, such that $c_2$ lies on the line segment connecting $c_1$ and $c_3$ (so, in particular, the points $c_1,c_2,c_3$ are collinear).
Let $c\in C\setminus\chS(\{c_1,c_2,c_3\})$ and $(i_1,\ldots,i_m)\in\{1,2,3\}^m$ with $m\in\mathbb N$.
Suppose that the sequence $(i_1,\ldots,i_m)$ has the elements of\/ $\{1,2\}$ on even positions and the $3$'s on odd positions, or vice versa. Then
\begin{equation}%
(R_{i_m}\circ\dots \circ R_{i_1})(c)\neq c.
\end{equation}
\end{lemma}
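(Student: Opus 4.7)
The plan is to distinguish cases according to the parity of $m$. For $m$ odd, I would argue exactly as in part~(i) of \cref{l:uperLowerFirst}: each reversion $R_i$ (for $i\in\{1,2,3\}$) interchanges the two open arcs $C_+$ and $C_-$ of $C$ on opposite sides of the line $L$ through $c_1,c_2,c_3$, so a composition of an odd number of reversions sends $c$ to the arc opposite from it, and hence does not fix $c$.

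For $m=2k$ even, I would pass to a projective coordinate $u$ on $C$ in which the two points of $L\cap C$ correspond to $0$ and $\infty$. Each $R_i$ fixes this pair setwise and swaps its two elements (and has no real fixed points, since a reversion has none), hence in this coordinate it has the form $u\mapsto\mu_i/u$ for some negative real $\mu_i$. Consequently $R_j\circ R_3$ and $R_3\circ R_j$ for $j\in\{1,2\}$ are the dilations $u\mapsto(\mu_j/\mu_3)u$ and $u\mapsto(\mu_3/\mu_j)u$, respectively, and any two such dilations commute. In either subcase of the hypothesis, $R_{i_m}\circ\cdots\circ R_{i_1}$ factors as a composition of $k$ such dilations, and by commutativity it equals a single dilation whose multiplier is $\mu_1^p\mu_2^q\mu_3^{-(p+q)}$ (or its reciprocal), where $p$ and $q$ are the numbers of occurrences of $1$ and $2$ in $(i_1,\ldots,i_m)$, so that $p+q=k\ge 1$.

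The heart of the argument is to show that this multiplier differs from $1$. A direct computation---easily carried out when $C$ is the unit circle and $L$ is a diameter, where one finds $\mu_i=(a_i+1)/(a_i-1)$ with $a_i\in(-1,1)$ the abscissa of $c_i$---shows that the map $c_i\mapsto\mu_i$ from the open chord of $L$ inside $C$ to $(-\infty,0)$ is a strictly monotone M\"obius bijection. The hypothesis that $c_2$ lies strictly between $c_1$ and $c_3$ on $L$ then forces $|\mu_2|$ to lie strictly between $|\mu_1|$ and $|\mu_3|$, so the two numbers $\log|\mu_1|-\log|\mu_3|$ and $\log|\mu_2|-\log|\mu_3|$ are nonzero reals of the same sign. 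Consequently
\[\log\bigl|\mu_1^p\mu_2^q\mu_3^{-(p+q)}\bigr|=p\bigl(\log|\mu_1|-\log|\mu_3|\bigr)+q\bigl(\log|\mu_2|-\log|\mu_3|\bigr)\neq 0\]
whenever $p+q\ge 1$; the dilation is thus non-trivial and has only $u=0,\infty$ as fixed points, which correspond to the two points of $L\cap C\subset\chS(\{c_1,c_2,c_3\})$. Since $c\notin\chS(\{c_1,c_2,c_3\})$, the point $c$ is not fixed. The main technical step is establishing the strict monotonicity of $c_i\mapsto\mu_i$; I expect this to be a routine direct computation in normalized coordinates, with the general (non-diameter chord) case reducible by an affine change of coordinates that respects the projective structure of reversions.
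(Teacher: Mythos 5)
Your treatment of odd $m$ is the same arc-swapping observation as in the paper, but for even $m$ you take a genuinely different route. The paper stays elementary and dynamical: after reducing (via inverses) to the case where the $3$'s occupy the odd positions, it notes that $(R_{i_2}\circ R_3)(c)$ lands in one of the two subarcs $C_+^1$ of $C_+$ determined by $c$, that $(R_1\circ R_3)(C_+^1)\subset C_+^1$ and $(R_2\circ R_3)(C_+^1)\subset C_+^1$ (this is exactly where $c_2\in(c_1,c_3)$ enters), and concludes by induction that the whole composition pushes $c$ into $C_+^1\not\ni c$. You instead exploit the projective structure of the circle: in a parameter with $L\cap C=\{0,\infty\}$ each $R_i$ is $u\mapsto\mu_i/u$ with $\mu_i<0$, the composition is a dilation with multiplier $\mu_1^p\mu_2^q\mu_3^{-(p+q)}$ (or its reciprocal), and the hypothesis forces $|\mu_2|$ strictly between $|\mu_1|$ and $|\mu_3|$, so the multiplier is $\neq 1$ and the only fixed points are the two chord ends, which lie in $\chS(\{c_1,c_2,c_3\})$. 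This is sound in substance and in fact more quantitative than the paper's argument (it pinpoints exactly which signatures can be cycles, information the paper only extracts later via \cref{l:charNonempty}); its price is the background fact that a reversion is the restriction of a projective transformation of the conic, i.e.\ a M\"obius map of the parameter. That is true and classical (and verifiable by direct computation with a rational parametrization), but it is not among the facts the paper uses, so you must prove or cite it rather than just "pass to a projective coordinate".

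The one step that fails as written is the reduction of the monotonicity claim to the diameter case "by an affine change of coordinates": an affine map carrying a circle to a circle is a similarity, hence carries diameters to diameters and cannot move a non-diameter chord onto a diameter, so your normalized computation $\mu_i=(a_i+1)/(a_i-1)$ does not cover the general configuration. This is repairable in several ways: redo the computation for an arbitrary chord; or use a projective transformation preserving the circle (these act transitively on chords, map the interior onto the interior, and conjugate reversions to reversions, while changing $\mu$ only by a global rescaling or inversion, which does not affect whether $|\mu_2|$ lies between $|\mu_1|$ and $|\mu_3|$); or avoid computation altogether by noting that for a fixed $c_0\in C\setminus L$ one has $\mu_X=u(c_0)\,u(P_X(c_0))$, and $X\mapsto P_X(c_0)$ is a continuous injection of the open chord into the arc of $C\setminus L$ not containing $c_0$, hence strictly monotone in $u$, which gives the strict monotonicity of $X\mapsto\mu_X$ directly. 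With any of these fixes your proof is complete.
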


\begin{proof}   
Let $L$ denote the line passing through $c_1,c_2,c_3$, and $C_+$, $C_-$ the two connected components of $C\setminus L$. 
Then $C\setminus\chS(\{c_1,c_2,c_3\})=C_+\cup C_-$.
    
(i) If $m$ is odd, then it suffices to note that $(R_{i_m}\circ\dots \circ R_{i_1})(C_+)\subset C_-$ and $(R_{i_m}\circ\dots\circ R_{i_1})(C_-)\subset C_+$.

(ii) If $m$ is even, then $(R_{i_m}\circ\dots \circ R_{i_1})(C_+)\subset C_+$ and $(R_{i_m}\circ\dots\circ R_{i_1})(C_-)\subset C_-$.

Note that the maps $R_{i_m}\circ\dots \circ R_{i_1}$ and $R_{i_1}\circ\dots \circ R_{i_m}$ are mutual inverses.
In particular, the statements $(R_{i_m}\circ\dots \circ R_{i_1})(c)\neq c$ and $(R_{i_1}\circ\dots \circ R_{i_m})(c)\neq c$ are equivalent.
So we may assume that the sequence $(i_1,\ldots,i_m)$ has elements of $\{1,2\}$ on even positions and $3$'s on odd positions (as, in the `vice versa' case, we may replace the sequence $(i_1,\ldots,i_m)$ by $(i_m,\ldots,i_1)$).

Assume that $c\in C_+$; the case $c\in C_-$ is analogous. 

Clearly,
\begin{equation*}
(R_{i_2}\circ R_{i_1})(c)=(R_{i_2}\circ R_3)(c)\neq c.
\end{equation*}
The point $c$ splits the arc $C_+$ into two open subarcs, say $C_+^1$ and $C_+^2$. 
Let $C_+^1$ be the subarc that contains $(R_{i_2}\circ R_3)(c)$. 
From a picture, the reader may verify that
\begin{equation*}
(R_i\circ R_3)(C_+^1)\subset C_+^1\quad\text{whenever $i\in\{1,2\}$};
\end{equation*}
in particular,
\begin{equation*}
(R_{i_j}\circ R_{i_{j-1}})(C_+^1)=(R_{i_j}\circ R_3)(C_+^1)\subset C_+^1\quad\text{whenever $j\in\{2,\ldots,m\}$ is even}.
\end{equation*}
Consequently, by an easy induction,
\begin{equation*}
(R_{i_k}\circ\ldots\circ R_{i_3})(C_+^1)\subset C_+^1\quad \text{for every even }2<k\le m,
\end{equation*}
where we may also allow $k=2$ by interpreting the empty composition as the identity function (indeed, $C_+^1\subset C_+^1$). Hence
\begin{equation*}
(R_{i_m}\circ\ldots\circ R_{i_1})(c)=(R_{i_m}\circ\ldots\circ R_{i_3})(R_{i_2}\circ R_3(c))\in(R_{i_m}\circ\ldots\circ R_{i_3})(C_+^1)\subset C_+^1.
\end{equation*}
As $c\notin C_+^1$, the conclusion follows.
\end{proof}

\begin{lemma}\label{l:charNonempty}
Let $v=(v_1,v_2,v_3)\in\mathbb Z^3$ be given.
Then $[v]\neq\emptyset$ if and only if either $v_1v_2v_3\neq 0$, $|v_2|=|v_1|+|v_3|$ and $v$ is balanced, or $v=(0,0,0)$.
\end{lemma}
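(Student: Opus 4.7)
My plan is to handle the two directions of the biconditional separately; the case $v=(0,0,0)$ is immediate in both directions since $g=\emptyset$ belongs to $\stab(c)$ for any $s\in\mathcal K_3$ and any $c\in C$, so $(0,0,0)$ is a cycle for every $s\in\mathcal K_3$.

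For necessity, suppose $v\neq 0$ is a cycle for some $s=(C,(c_1,c_2,c_3))\in\mathcal K_3$, witnessed by $g\in G_3$ and $c\in C\setminus\chu_{\set(s)}(\{c_1,c_2,c_3\})$. Balancedness of $v$ is immediate from~\cref{cor:balanced}. The key reduction is to pass to the normal form $\overline g$, which by~\cref{l:theSameSignatures} still satisfies $\overline g\in\stab(c)$, and whose entries have a rigid parity structure: symbol $i$ occurs only at odd positions if $v_i>0$, only at even positions if $v_i<0$, and not at all if $v_i=0$. To exclude $v_i=0$ for some $i$, I would note that $\overline g$ then consists of only two letters, so (after an obvious relabeling) it lies in $G_2$ and acts through the two-point reversion configuration on $C$; since $c$ remains off the chord, \cref{l:uperLowerFirst} forces $\overline g=\emptyset$, hence $v=0$, a contradiction. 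To exclude $v_1$ and $v_3$ having opposite signs, I would use~\cref{l:plusMinus} to assume WLOG $v_1>0$, $v_3<0$, and split on $v_2$. If $v_2>0$, the parity pattern in $\overline g$ is $\{1,2\}$ on odd positions versus $\{3\}$ on even, directly contradicting~\cref{l:uperLowerFirst3}. If $v_2<0$, the pattern is $\{1\}$ on odd versus $\{2,3\}$ on even; here I would apply~\cref{l:uperLowerFirst3} to the relabeled configuration $s'=(C,(c_3,c_2,c_1))$, still in $\mathcal K_3$ because $c_2$ remains the middle point, in which the pattern becomes the required $\{1,2\}$ versus $\{3\}$. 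With $v_1,v_3$ of the same sign, $|v_2|=|v_1+v_3|=|v_1|+|v_3|$ follows from balancedness.

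For sufficiency, by~\cref{l:plusMinus} I may assume $v_1,v_3>0$ and $v_2=-(v_1+v_3)<0$. I would take $g$ to be the concatenation of $v_1$ copies of $(1,2)$ followed by $v_3$ copies of $(3,2)$, which is irreducible and satisfies $N(g)=v$ by direct count. Then $c\cdot g=T_3^{v_3}\circ T_1^{v_1}(c)$, where $T_i:=R_2\circ R_i$ for $i\in\{1,3\}$. The decisive geometric fact is that each of $R_1,R_2,R_3$ swaps the two points $p_+,p_-$ where the chord through $c_1,c_2,c_3$ meets $C$, so $T_1$ and $T_3$ are orientation-preserving M\"obius transformations of $C$ fixing both $p_+$ and $p_-$. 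In a projective coordinate $t$ on $C\setminus\{p_+,p_-\}$ they act as dilations $t\mapsto\lambda_1 t$ and $t\mapsto\lambda_3 t$ for some $\lambda_1,\lambda_3\in(0,\infty)\setminus\{1\}$ determined by the cross-ratios of the $c_i$ with $p_\pm$. I would then choose the positions of $c_1$ and $c_3$ on the chord (keeping $c_2$ strictly between them so that $s\in\mathcal K_3$) so as to achieve $\lambda_1^{v_1}\lambda_3^{v_3}=1$; this is possible by an intermediate-value argument, because $\lambda_1$ and $\lambda_3$ vary independently and continuously over $(0,\infty)$ as $c_1,c_3$ slide along their halves of the chord. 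For such an $s$, $T_3^{v_3}\circ T_1^{v_1}$ is the identity on $C$, so any $c\in C\setminus\chu_{\set(s)}(\{c_1,c_2,c_3\})$ witnesses that $v$ is a cycle for $s$, i.e., $s\in[v]$.

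The main obstacle I anticipate is the geometric construction in the sufficiency direction: one must verify carefully that the dilation factors $\lambda_1$ and $\lambda_3$ can indeed be adjusted independently by moving $c_1$ and $c_3$ along the chord, and that the resulting range of $(\lambda_1,\lambda_3)$ is rich enough to realize the multiplicative relation $\lambda_1^{v_1}\lambda_3^{v_3}=1$ for every admissible $v$ while keeping $c_2$ in the middle. The necessity direction is essentially combinatorial once the previous lemmas are in place, with only the relabeling trick in the $\{1\}$-versus-$\{2,3\}$ case requiring attention.
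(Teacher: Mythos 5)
Your proposal is correct. The necessity half follows the paper's own route almost verbatim: balancedness from \cref{cor:balanced}, passage to the normal form $\overline g$ via \cref{l:theSameSignatures}, exclusion of a vanishing coordinate via \cref{l:uperLowerFirst}, and exclusion of the wrong sign pattern via \cref{l:uperLowerFirst3} (your relabelling $(C,(c_3,c_2,c_1))$ is exactly the ``analogous case'' the paper leaves implicit). The sufficiency half is genuinely different. The paper fixes $c_1=\left(-\frac12,0\right)$, $c_3=\left(\frac12,0\right)$ on the unit circle, moves the middle point $c_2=(a,0)$, and shows that the map $a\mapsto\rho(a)$, which is continuous and lands in opposite open quarter-circles at $a=\mp\frac12$, must hit $(0,1)$ by an intermediate-value argument; you instead fix the chord, use that each reversion is an involution of the circle swapping the chord endpoints $p_\pm$, so that $T_1=R_2\circ R_1$ and $T_3=R_2\circ R_3$ fix $p_\pm$ and act as dilations $t\mapsto\lambda_i t$ in a projective coordinate, and you solve $\lambda_1^{v_1}\lambda_3^{v_3}=1$ exactly by placing $c_1,c_3$. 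Your route buys an exact (non-IVT) construction and in effect explains the porism phenomenon---the cycle condition becomes a single multiplicative relation between two ``translation lengths''---but it requires the structural fact that reversions are projective involutions of the conic (equivalently, boundary actions of hyperbolic half-turns in the Klein model), which the paper never develops and which you would have to supply; the paper's IVT argument needs only continuity of the reversion in its parameters. One caveat on the step you flagged: it is not true that $\lambda_1$ and $\lambda_3$ each range over all of $(0,\infty)$. Because $c_1$ and $c_3$ lie on opposite sides of $c_2$, the two compositions translate in opposite directions along the chord, so in a common coordinate one factor ranges exactly over $(0,1)$ and the other exactly over $(1,\infty)$; this is precisely what makes $\lambda_1^{v_1}\lambda_3^{v_3}=1$ solvable with $v_1,v_3>0$ (fix $\lambda_3>1$ and take $\lambda_1=\lambda_3^{-v_3/v_1}\in(0,1)$), so the verification you anticipated does go through, but it should be stated with the correct half-ranges rather than with both factors running over all of $(0,\infty)$.
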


\begin{proof}
To prove one of the implications, assume that $[v]\neq\emptyset$ and $v\neq(0,0,0)$.
Then there is $s=(C,(c_1,c_2,c_3))\in\mathcal K_3$ such that $v$ is a cycle for $s$.
That is, there are $g\in G_3$ and $c\in C\setminus\chu_{\set(s)}(\{c_1,c_2,c_3\})$ such that $N(g)=v$ and $g\in\stab(c)$.
By \cref{l:theSameSignatures}, we may assume that $g=\overline g$.

First, we show that $v_1v_2v_3\neq 0$.
Suppose for a contradiction that $v_3=0$ (the cases $v_1=0$ and $v_2=0$ are completely analogous).
Then $g=\overline g$ consists only of (alternating) $1$'s and $2$'s.
Then, by \cref{l:uperLowerFirst},
the fact that $c=c\cdot g$ implies that $g=\emptyset$. But then $v=N(g)=(0,0,0)$, a contradiction.

By \cref{cor:balanced}, the vector $v$ is balanced, that is, $v_1+v_2+v_3=0$.
So we have, depending on the signs, $|v_2|=|v_1|+|v_3|$, $|v_1|=|v_2|+|v_3|$, or $|v_3|=|v_1|+|v_2|$.
It only remains to show that the latter two cases cannot happen.
We will only show that $|v_3|=|v_1|+|v_2|$ cannot happen; the other case is analogous.
So suppose for a contradiction that $|v_3|=|v_1|+|v_2|$.
Then the sequence $g=\overline g$ has the elements of $\{1,2\}$ on even positions and the $3$'s on odd positions, or vice versa.
Recall that $c_2$ lies on the line segment connecting $c_1$ and $c_3$ by the definition of $\mathcal K_3$.
Hence, by \cref{l:uperLowerFirst3}, $c\cdot g\neq c$, a contradiction.

Now we will show the other implication.
If $v=(0,0,0)$ then $[v]\neq\emptyset$ trivially ($(0,0,0)$ is a cycle for any $s=(C,(c_1,c_2,c_3))\in\mathcal K_3$, as witnessed by $g=\emptyset\in G_3$ and any $c\in C\setminus\chu_{\set(s)}(\{c_1,c_2,c_3\})$).
So assume that $v_1v_2v_3\neq 0$, $|v_2|=|v_1|+|v_3|$ and $v$ is balanced. 
Let $K=\{(x,y)\in\mathbb R^2:x^2+y^2=1\}$ be the unit circle in the plane.
We put $c_1=\left(-\frac12,0\right)$ and $c_3=\left(\frac12,0\right)$.
It is enough to show that there exists $a\in\left(-\frac12,\frac12\right)$ such that, for $c_2=\left(a,0\right)$, we have $(K,(c_1,c_2,c_3))\in[v]$.

In view of \Cref{l:plusMinus} we may (and do) assume that $v_2>0$.
Then, as $v$ is balanced, we have $v_1<0$ and $v_3<0$.
We put $g=(2,1)^{|v_1|}(2,3)^{|v_3|}\in G_3$.
As
\begin{equation*}
N(g)=(-|v_1|,|v_1|+|v_3|,-|v_3|)=v,	
\end{equation*}
it suffices to find $a\in\left(-\frac12,\frac12\right)$
such that, for $c_2=(a,0)$, 
it holds
\begin{equation}\label{e:weWant}
(0,1)\cdot g=(0,1).
\end{equation}
To shorten the notation in the proof, for every $a\in(-1,1)$, we will use the symbol $R_a$ for the reversion $P_{(a,0)}\colon K\to K$ through the point $(a,0)$.
Then~\eqref{e:weWant} translates to
\begin{equation}\label{e:X_a}
\left(\left(R_{\frac12}\circ R_a\right)^{|v_3|}\circ\left(R_{-\frac12}\circ R_a\right)^{|v_1|}\right)(0,1)=(0,1).
\end{equation}

Let us consider the map $\rho\colon\left[-\frac12,\frac12\right]\to K$ given by
\begin{equation*}
\rho(a)=\left(\left(R_{\frac12}\circ R_a\right)^{|v_3|}\circ\left(R_{-\frac12}\circ R_a\right)^{|v_1|}\right)(0,1).	
\end{equation*}
Note that
\begin{equation}\label{e:K++0}
\rho\left(-\frac12\right)=\left(R_{\frac12}\circ R_{-\frac12}\right)^{|v_3|}(0,1)\quad\text{and}\quad\rho\left(\frac12\right)=\left(R_{-\frac12}\circ R_{\frac12}\right)^{|v_1|}(0,1).
\end{equation}
We put 
\begin{equation*}
K_{+}=\left\{(x,y)\in K:x>0,y>0\right\}\quad\text{and}\quad K_{-}=\left\{(x,y)\in K:x<0,y>0\right\};
\end{equation*}
It is clear from a picture that 
\begin{equation}\label{e:K++1}
\left(R_{\frac12}\circ R_{-\frac12}\right)(0,1)\in K_{+},\quad \left(R_{-\frac12}\circ R_{\frac12}\right)(0,1)\in K_{-},
\end{equation}
and
\begin{equation}\label{e:K++2}
\left(R_{\frac12}\circ R_{-\frac12}\right)(K_{+})\subset K_{+},\quad \left(R_{-\frac12}\circ R_{\frac12}\right)(K_{-})\subset K_{-}.
\end{equation}
By~\eqref{e:K++0}, \eqref{e:K++1} and~\eqref{e:K++2}, as $v_1v_3\neq 0$, we obtain that
\begin{equation}\label{e:K++3}
\rho\left(-\frac12\right)\in K_{+}\quad\text{and}\quad\rho\left(\frac12\right)\in K_{-}.
\end{equation}

To complete the proof, it only remains to show that the map $\rho$ is continuous.
Indeed, continuity would imply the existence of some $a\in\left(-\frac12,\frac12\right)$ such that the first coordinate of $\rho(a)$ is zero (as the first coordinates of $\rho\left(-\frac12\right)$ and $\rho\left(\frac12\right)$ have different signs by~\eqref{e:K++3}).
But the image of $\rho$ is clearly contained in the intersection of $K$ with the upper half plane, so this would mean that $\rho(a)=(0,1)$, which is equivalent to~\eqref{e:X_a}.

Next, we verify that $\rho$ is continuous.  We claim that, for every $n\in\mathbb N$, the map $\Psi_n\colon K\times\left[-\frac12,\frac12\right]^n\to K$ defined by
\begin{equation*}
\Psi_n(c,(a_1,\ldots,a_n))=\left(R_{a_n}\circ\ldots\circ R_{a_1}\right)(c)
\end{equation*}
is continuous.
This is all we need as the map $\rho$ is the composition of 
$\Psi_{|v_1|+|v_2|+|v_3|}$ and the (linear) continuous map
\begin{equation*}
\left[-\frac12,\frac12\right]\ni a\mapsto \left( (0,1),\left(a, -\frac12,a, -\frac12,
\dots,a, -\frac12, a, \frac12,a, \frac12, \dots,
a, \frac12\right) \right)\in K\times\left[-\frac12,\frac12\right]^{|v_1|+|v_2|+|v_3|},
\end{equation*}
where $-\frac12$ appears $\abs{v_1}$ times and $\frac12$ appears
$\abs{v_3}$ times.

We prove our claim by induction on $n$.
For $n=1$, this is obvious from the geometrical meaning of the definition of the reversion.
Now fix $n\in\mathbb N$ and suppose that $\Psi_i$ is continuous for every $i\in\{1,\ldots,n\}$.
For every $(c,(a_1,\ldots,a_{n+1}))\in K\times\left[-\frac12,\frac12\right]^{n+1}$, we have
\begin{equation*}
\Psi_{n+1}(c,(a_1,\ldots,a_{n+1}))=R_{a_{n+1}}(\Psi_n(c,(a_1,\ldots,a_n)))=\Psi_1(\Psi_n(c,(a_1,\ldots,a_n),a_{n+1})).
\end{equation*}
So, the continuity of $\Psi_{n+1}$ follows by the continuity of $\Psi_1$ and $\Psi_n$.
\end{proof}

\begin{lemma}\label{l:dependency}
Let $v^1,v^2\in\mathbb Z^3$ be given.
Then $[v^1]\cap[v^2]\neq\emptyset$ if and only if $[v^1]\neq\emptyset$, $[v^2]\neq\emptyset$ and $v^1,v^2$ are linearly dependent.
\end{lemma}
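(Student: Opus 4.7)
The plan is to use that the set of cycles for a fixed $s\in\mathcal K_3$ is closed under $\mathbb Z$-linear combinations (by \cref{l:plusMinus,l:multiple}), combined with the restrictive form of nonzero cycles supplied by \cref{l:charNonempty}: every nonzero cycle has all three coordinates nonzero. These two ingredients together will force any two nonzero cycles for the same $s$ to be proportional.

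For the $(\Leftarrow)$ direction, assume $[v^1]$ and $[v^2]$ are nonempty and $v^1,v^2$ are linearly dependent. If one of the $v^i$ equals $(0,0,0)$, then $[v^i]=\mathcal K_3$ (witnessed by $g=\emptyset$, as observed inside the proof of \cref{l:charNonempty}), and the conclusion is immediate. Otherwise both are nonzero, and linear dependence over $\mathbb R$ of two integer vectors yields a rational scalar $\lambda=p/q$ with $p,q$ coprime integers, $q>0$ and $p\neq 0$, so that $pv^1=qv^2$. Starting from any $s\in[v^1]$, I would apply \cref{l:multiple} twice in succession: first to conclude that $pv^1$, hence $qv^2$, is a cycle for $s$; second to conclude that $v^2$ itself is a cycle for $s$. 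This gives $[v^1]\subset[v^2]$ and in particular the intersection is nonempty.

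For the $(\Rightarrow)$ direction, pick $s\in[v^1]\cap[v^2]$; nonemptiness of both classes is then automatic. The subcase in which one of the vectors is $(0,0,0)$ is trivial, so assume both are nonzero. By \cref{l:charNonempty}, all coordinates of $v^1=(a_1,a_2,a_3)$ and $v^2=(b_1,b_2,b_3)$ are nonzero. I would then form the auxiliary vector $w=b_1 v^1 - a_1 v^2$, which is a cycle for $s$ because $b_1 v^1$ and $a_1 v^2$ are cycles by \cref{l:multiple} and their difference is a cycle by \cref{l:plusMinus}. By construction $w_1=0$, so the nonvanishing-coordinates clause of \cref{l:charNonempty} forces $w=(0,0,0)$. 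Hence $b_1 v^1=a_1 v^2$, which exhibits the desired linear dependence (over $\mathbb Q$, hence over $\mathbb R$).

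The only genuine step is spotting the right $\mathbb Z$-linear combination in $(\Rightarrow)$: killing a single coordinate collides with the all-coordinates-nonzero restriction of \cref{l:charNonempty}, which is precisely what promotes the abstract closure of cycles under $\mathbb Z$-linear combinations into actual proportionality. Everything else is a direct application of already-proved closure properties.
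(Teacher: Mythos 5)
Your proposal is correct and follows essentially the same route as the paper: the forward direction forms the same combination $b_1v^1-a_1v^2$ (the paper writes $Lv^1-lv^2$), kills its first coordinate, and invokes \cref{l:charNonempty} together with \cref{l:plusMinus,l:multiple}, while the converse uses the same two applications of \cref{l:multiple} after clearing denominators. The only cosmetic difference is that the paper phrases the final dependence via vanishing $2\times 2$ determinants rather than directly from $b_1v^1=a_1v^2$.
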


\begin{proof}
Suppose first that $[v^1]\neq\emptyset$, $[v^2]\neq\emptyset$ and $v^1,v^2$ are linearly dependent.
If $v^1=(0,0,0)$ then, trivially, $[v^1]=\mathcal K_3$.
In that case, $[v^1]\cap[v^2]=[v^2]\neq\emptyset$.
Likewise if $v^2=(0,0,0)$.
So we may assume that both vectors $v^1,v^2$ are non-zero.

By the linear dependency, $v^2$ is a multiple of $v^1$.
As both $v^1,v^2$ have integer entries, the multiple must be given by a rational number.
So there are $k_1,k_2\in\mathbb Z\setminus\{0\}$ such that $k_1v^1=k_2v^2$.
As $[v^1]\neq\emptyset$, there is $s\in\mathcal K_3$ such that $v^1$ is a cycle for $s$.
By \cref{l:multiple}, $k_1v^1=k_2v^2$ is also a cycle for $s$. Another application of \cref{l:multiple} gives us that $v^2$ is a cycle for $s$.
So $s\in[v^1]\cap[v^2]$.

Now suppose that $[v^1]\cap[v^2]\neq\emptyset$. Then $[v^1]\neq\emptyset$ and $[v^2]\neq\emptyset$.
If one (or both) of $v^1,v^2$ is the zero vector, then $v^1,v^2$ are linearly dependent and we are done.
So we may assume that both $v^1,v^2$ are non-zero.
Denote $v^1=(l,m,r)$ and $v^2=(L,M,R)$.
Let $s\in\mathcal K_3$ be such that both $v^1,v^2$ are cycles for $s$.
Then, by \cref{l:multiple}, $Lv^1$ and $lv^2$ are also cycles for $s$.
By \cref{l:plusMinus}, the vector
\[Lv^1-lv^2=(0,Lm-lM,Lr-lR)\]
is also a cycle for $s$.
By \cref{l:charNonempty}, we must have $l m r \neq 0$, $L M R \neq 0$ and
\[Lm-lM=Lr-lR=0.\]
This means that the determinants of the matrices
\begin{equation*}
\begin{bmatrix}
l & m \\ L & M
\end{bmatrix}
\text{ and }
\begin{bmatrix}
l & r \\ L & R
\end{bmatrix}
\end{equation*}
are zero, and, as $l L\neq 0$, we immediately obtain that $v^1,v^2$ are linearly dependent.
\end{proof}

\begin{lemma}\label{l:kernel}
Let $s=(C,(c_1,c_2,c_3))\in\mathcal K_3$ be given.
Then there is $v^0\in\mathbb Z^3$ such that
\begin{equation}\label{e:kv_0}
\left\{v\in\mathbb Z^3:s\in [v]\right\}=\left\{kv^0:k\in\mathbb Z\right\}.
\end{equation}
Moreover, either $\gcd(v^0)=1$, or $v^0=(0,0,0)$.
\end{lemma}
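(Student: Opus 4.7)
The plan is to recognize $V(s) := \{v\in\mathbb Z^3 : s\in[v]\}$ as a subgroup of $(\mathbb Z^3,+)$ in which any two nonzero elements are linearly dependent, and then take a primitive generator.

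First, I would verify that $V(s)$ is a subgroup of $(\mathbb Z^3,+)$. The zero vector lies in $V(s)$ because $g=\emptyset\in\stab(c)$ and $N(\emptyset)=(0,0,0)$ for any $c\in C\setminus\chu_{\set(s)}(\{c_1,c_2,c_3\})$; closure under addition and negation is precisely the content of \cref{l:plusMinus}. If $V(s)=\{(0,0,0)\}$, take $v^0=(0,0,0)$ and we are finished.

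Otherwise, fix any $w\in V(s)\setminus\{(0,0,0)\}$, let $d=\gcd(w)\geq 1$, and set $v^0 := w/d\in\mathbb Z^3$, so that $\gcd(v^0)=1$ by construction. Since $w=d\,v^0\in V(s)$ with $d\neq 0$, \cref{l:multiple} gives $v^0\in V(s)$. Another application of \cref{l:multiple}, together with $(0,0,0)\in V(s)$, yields $\mathbb Z v^0\subset V(s)$.

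For the reverse inclusion, I would pick $u\in V(s)\setminus\{(0,0,0)\}$ and apply \cref{l:dependency} to $u$ and $v^0$ to obtain their linear dependency, whence $u=\alpha v^0$ for some $\alpha\in\mathbb Q\setminus\{0\}$. Writing $\alpha=p/q$ in lowest terms with $q\geq 1$, the equations $q\,u_i=p\,v^0_i$ combined with $\gcd(p,q)=1$ force $q$ to divide each $v^0_i$, hence $q\mid\gcd(v^0)=1$. Thus $\alpha\in\mathbb Z$ and $u\in\mathbb Z v^0$, completing the proof of \eqref{e:kv_0}; the primitivity clause is automatic from the construction. The essential work is already packaged into \cref{l:plusMinus}, \cref{l:multiple}, and \cref{l:dependency}, so only the elementary divisibility argument isolating the generator remains, and I do not anticipate any real obstacle.
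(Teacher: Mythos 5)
Your proof is correct and follows essentially the same route as the paper: normalize a nonzero cycle by its gcd to obtain a primitive $v^0$, get $\mathbb Z v^0\subset\{v:s\in[v]\}$ from \cref{l:multiple}, and derive the reverse inclusion from \cref{l:dependency} plus the elementary divisibility argument forced by $\gcd(v^0)=1$. The additional observation that the set of cycles is a subgroup (via \cref{l:plusMinus}) is harmless but not actually needed, and the paper does not use it.
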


\begin{proof}
Note that $(0,0,0)$ is trivially a cycle for $s$.
If $\{v\in\mathbb Z^3:s\in [v]\}=\{(0,0,0)\}$,
then~\eqref{e:kv_0} holds for $v^0=(0,0,0)$.
So we may assume that $\{v\in\mathbb Z^3:s\in [v]\}\neq\{(0,0,0)\}$.
Then there is $v\neq(0,0,0)$ which is a cycle for $s$.
By \cref{l:charNonempty}, all three entries of $v$ are non-zero.
Let 
\begin{equation*}
v^0=\frac v{\gcd(v)},
\end{equation*}
so that $\gcd(v^0)=1$.
Then $kv^0$ is a cycle for $s$ for every $k\in\mathbb Z$ by \cref{l:multiple}.
It remains to show that there are no other cycles for $s$.

So let $v'\in\mathbb Z^3\setminus\{(0,0,0)\}$ be an arbitrary cycle for $s$.
By \cref{l:dependency}, $v^0$ and $v'$ are linearly dependent.
So there are $k_1,k_2\in\mathbb Z\setminus\{0\}$ such that $k_1v^0=k_2v'$.
As $\gcd(v^0)=1$, we have $\gcd(k_2v')=\gcd(k_1v^0)=k_1$.
But all entries of $k_2v'$ are divisible by $k_2$, so $k_2$ divides $k_1$.
Thus $\frac{k_1}{k_2}\in\mathbb Z$, and then
\begin{equation*}
v'=\frac{k_1}{k_2}v^0\in\left\{kv^0:k\in\mathbb Z\right\},
\end{equation*}
which ends the proof.
\end{proof}

\begin{lemma}\label{l:only1choice}
Let $K=\{(x,y)\in\mathbb R^2:x^2+y^2=1\}$ be the unit circle in the plane and let $-1<a_1<a_2<1$ be fixed.
For every $a_3\in(a_2,1)$, let 
$s_{a_3}\in\mathcal K_3$ be given by
\begin{equation*}
s_{a_3}=(K,((a_1,0),(a_2,0),(a_3,0))).	
\end{equation*}
Then, for every $v\in\mathbb Z^3\setminus\{(0,0,0)\}$, there exists at most one $a_3\in(a_2,1)$ such that $v$ is a cycle for $s_{a_3}$.
\end{lemma}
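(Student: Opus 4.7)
The plan is to reduce the statement ``$v$ is a cycle for $s_{a_3}$'' to a strictly monotonic scalar equation in $a_3$.

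First, by \cref{cor:balanced} and \cref{l:charNonempty} applied to $s_{a_3}$ (in which $c_2$ lies strictly between $c_1$ and $c_3$ on the $x$-axis), if $v\neq(0,0,0)$ is a cycle for \emph{any} $s_{a_3}$, then $v$ must be balanced, have all coordinates nonzero, and satisfy $|v_2|=|v_1|+|v_3|$; otherwise the statement is vacuously true. By \cref{l:plusMinus} I may assume $v_2>0$, and hence $v_1,v_3<0$. I fix the irreducible sequence $g=(2,1)^{|v_1|}(2,3)^{|v_3|}\in G_3$, for which a direct count gives $N(g)=v$. Since $(0,1)$ lies off the $x$-axis and so outside $\chu_{\set(s_{a_3})}(\{c_1,c_2,c_3\})$ for every admissible $a_3$, \cref{l:cyclePoint} together with \cref{l:theSameSignatures} yields that $v$ is a cycle for $s_{a_3}$ if and only if the composition
\begin{equation*}
\bigl(P_{(a_3,0)}\circ P_{(a_2,0)}\bigr)^{|v_3|}\circ\bigl(P_{(a_1,0)}\circ P_{(a_2,0)}\bigr)^{|v_1|}
\end{equation*}
sends $(0,1)$ to itself.

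The key step is an explicit Möbius representation of the reversions via the tangent half-angle substitution $t=\tan(\theta/2)$ on $K$. A direct secant-line calculation shows that $P_{(a,0)}$ acts on $t$ by $t\mapsto (a-1)/((a+1)t)$ for $a\in(-1,1)$, so with $\mu(a)\coloneqq (1-a)/(1+a)>0$ one obtains
\begin{equation*}
P_{(a',0)}\circ P_{(a,0)}\colon t\longmapsto \frac{\mu(a')}{\mu(a)}\,t,
\end{equation*}
i.e., a dilation of $t$ by the factor $\mu(a')/\mu(a)$. Consequently the composition displayed above is the dilation by $(\mu(a_3)/\mu(a_2))^{|v_3|}(\mu(a_1)/\mu(a_2))^{|v_1|}$, and since $(0,1)$ has parameter $t=1$, it is fixed if and only if
\begin{equation*}
\mu(a_3)^{|v_3|}\,\mu(a_1)^{|v_1|}=\mu(a_2)^{|v_1|+|v_3|}.
\end{equation*}

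Finally, $\mu$ is positive and strictly decreasing on $(-1,1)$ and $|v_3|\ge 1$; so the left-hand side of the last equation is a strictly monotonic function of $a_3\in(a_2,1)$, while the right-hand side is independent of $a_3$. At most one $a_3$ can therefore satisfy it, which is what we need. The main obstacle is obtaining the explicit Möbius form of a single reversion; everything else is bookkeeping and a monotonicity observation. (An alternative viewpoint is that each $P_{(a',0)}\circ P_{(a,0)}$ is a hyperbolic translation along the $x$-axis in the Poincaré disk model, with translation length depending monotonically on $a'$, giving the same conclusion.)
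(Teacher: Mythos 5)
Your proof is correct, but it takes a genuinely different route from the paper's. The paper argues by contradiction: assuming $a_3<a_3'$ both make $v$ a cycle, it takes an irreducible word $g=\overline g$ with $N(g)=v$, applies the reversions to $(0,1)$ one step at a time for both parameter values, and shows by induction --- using that each reversion is monotone in the first coordinate on each half-circle and that $a\mapsto P_{(a,0)}(x)$ is monotone --- that from the first occurrence of the index $3$ onward the two partial orbits have distinct first coordinates (with alternating order), so the two final points cannot both be $(0,1)$, contradicting \cref{l:cyclePoint}. You instead make the reversions explicit: in the tangent half-angle coordinate $t$, the chord joining the parameters $t,t'$ meets the $x$-axis at $(a,0)$ exactly when $tt'=(a-1)/(a+1)$, so $P_{(a,0)}$ acts as $t\mapsto -\mu(a)/t$ with $\mu(a)=(1-a)/(1+a)$, pairs of reversions become dilations, and the cycle condition --- which you reduce, exactly as in the proof of \cref{l:charNonempty}, to the single word $g=(2,1)^{|v_1|}(2,3)^{|v_3|}$ fixing $(0,1)$, legitimately invoking \cref{l:cyclePoint} for the ``only if'' direction and the definition of a cycle for the ``if'' direction --- becomes the scalar equation $\mu(a_3)^{|v_3|}\mu(a_1)^{|v_1|}=\mu(a_2)^{|v_1|+|v_3|}$, whose left side is strictly monotone in $a_3$ since $\mu$ is positive and strictly decreasing and $|v_3|\ge 1$. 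I verified the half-angle formula (the one step you assert rather than derive), and it is correct. Your computation buys more than the statement: it exhibits the exact locus of parameters for which $v$ is a cycle, so it would reprove the existence part of \cref{l:charNonempty} and this uniqueness lemma in one stroke; the paper's argument is softer, avoiding any explicit parametrization at the cost of a case-by-case induction on the orbit.
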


\begin{proof}
As in the proof of \Cref{l:charNonempty}, for every $a\in(-1,1)$, we will use the symbol $R_a$ for the reversion $P_{(a,0)}\colon K\to K$ through the point $(a,0)$.
For notational purposes, we define $a_1':=a_1$ and $a_2':=a_2$.

Suppose for a contradiction that $a_3,a_3'\in(a_2,1)$, $a_3<a_3'$, are such that some $v=(v_1,v_2,v_3)\neq(0,0,0)$ is a cycle for both $s_{a_3}$ and $s_{a_3'}$.
Then we can find $g=(i_1,\ldots,i_n)\in G_3$ such that $N(g)=v$. Note that $n>0$ as $v\neq(0,0,0)$, and that $v_1v_2v_3\neq 0$ by \cref{l:charNonempty}. We may assume that, for each $i\in\{1,2,3\}$, $g$ contains $i$ only at even positions, or only at odd positions (if it is not the case, just replace $g$ by $\overline g$).

Let $m\in\{1,\ldots,n\}$ be the first index such that $i_m=3$.
Such an index exists as $v_3\neq 0$.
Then
\begin{equation*}
\left(R_{a_{i_{m-1}}}\circ\ldots\circ R_{a_{i_1}}\right)(0,1)=\left(R_{a'_{i_{m-1}}}\circ\ldots\circ R_{a'_{i_1}}\right)(0,1).	
\end{equation*}
Consequently, by our assumption $a_3'>a_3$, the first coordinate of
\begin{equation*}
\left(R_{a'_{i_m}}\circ\ldots\circ R_{a'_{i_1}}\right)(0,1)=R_{a'_3}\left(\left(R_{a'_{i_{m-1}}}\circ\ldots\circ R_{a'_{i_1}}\right)(0,1)\right)	
\end{equation*}
is strictly bigger that the first coordinate of
\begin{equation*}
\left(R_{a_{i_m}}\circ\ldots\circ R_{a_{i_1}}\right)(0,1)=R_{a_3}\left(\left(R_{a_{i_{m-1}}}\circ\ldots\circ R_{a_{i_1}}\right)(0,1)\right).	
\end{equation*}
Now, by induction on $k$, the reader may easily verify that, for every $k\in\{0,\ldots,n-m\}$ (the case $k=0$ was verified just now), it holds:
\begin{itemize}
\item if $k$ is even then the first coordinate of $\left(R_{a'_{i_{m+k}}}\circ\ldots\circ R_{a'_{i_1}}\right)(0,1)$ is strictly bigger that the first coordinate of $\left(R_{a_{i_{m+k}}}\circ\ldots\circ R_{a_{i_1}}\right)(0,1)$,
\item if $k$ is odd then the first coordinate of $\left(R_{a'_{i_{m+k}}}\circ\ldots\circ R_{a'_{i_1}}\right)(0,1)$ is strictly smaller that the first coordinate of $\left(R_{a_{i_{m+k}}}\circ\ldots\circ R_{a_{i_1}}\right)(0,1)$;
\end{itemize}
the key ingredients of the induction step are the following facts:
\begin{itemize}
\item if $i_{m+k}=3$ then $k$ is even (by our assumption on $g$),
\item any reversion restricted to the upper half-circle is strictly decreasing in terms of the first coordinate, the same is true for the restriction to the lower half-circle,
\item for any fixed $x\in K\setminus\{(-1,0),(1,0)\}$, the map $"(-1,1)\ni a\mapsto R_a(x)\in K"$ is strictly increasing in terms of the first coordinate.
\end{itemize}

Finally, the case $k=n-m$ implies that
\begin{equation*}
\left(R_{a'_{i_n}}\circ\ldots\circ R_{a'_{i_1}}\right)(0,1)\neq \left(R_{a_{i_n}}\circ\ldots\circ R_{a_{i_1}}\right)(0,1).	
\end{equation*}
So either
\begin{equation*}
\left(R_{a'_{i_n}}\circ\ldots\circ R_{a'_{i_1}}\right)(0,1)\neq(0,1),
\end{equation*}
or
\begin{equation*}
\left(R_{a_{i_n}}\circ\ldots\circ R_{a_{i_1}}\right)(0,1)\neq(0,1).
\end{equation*}
But by \cref{l:cyclePoint}, this is a contradiction with our assumption that $v$ is a cycle for both $s_{a_3}$ and $s_{a_3'}$.
\end{proof}

Finally, we are ready for the proof of our main result.

\begin{proof}[Proof of \cref{t:isoclasses}]
Let $C$ be a circle in the plane and $c_1,c_2,c_3$ be pairwise distinct collinear points in the interior of $C$.
We start by showing that the set $C\cup\{c_1,c_2,c_3\}$ belongs to (at least) one of the sets \eqref{eq:firstline} and
\eqref{eq:secondline}.
Without loss of generality,
we may assume that $c_2$ lies on the line segment connecting $c_1$ and $c_3$.
Then there are exactly two distinct $s,s'\in\mathcal K_3$ such that $\set(s)=\set(s')=C\cup\{c_1,c_2,c_3\}$, namely $(C,(c_1,c_2,c_3))$ and $(C,(c_3,c_2,c_1))$.
Since we have just fixed the circle $C$ and the points $c_1,c_2,c_3$,
we have now the group action of $G_3$ on $C$ and the associated notions 
of orbits and centers (\cref{subsec:group,subsec:reversionETC})
as well as the meaning of cycles and $[v]$, $[[v]]$ for $v\in \Z^3$
(\cref{subsec:l3}).
It follows by the definitions that
\begin{equation}\label{e:twoCases}
\left\{v\in\mathbb Z^3:C\cup\{c_1,c_2,c_3\}\in[[v]]\right\}=
\left\{v\in\mathbb Z^3:s\in[v]\right\}\cup
\left\{v\in\mathbb Z^3:s'\in[v]\right\};
\end{equation}
there is an obvious relation between the two sets on the right hand side: with $\pi_{1,3}(v)$ defined as $(v_3,v_2,v_1)\in\mathbb Z^3$ for every $v=(v_1,v_2,v_3)\in\mathbb Z^3$, we have
\begin{equation}\label{e:aboutTwoCases}
\left\{v\in\mathbb Z^3:s'\in[v]\right\}=
\pi_{1,3} \left(\left\{v\in\mathbb Z^3:s\in[v]\right\}\right).
\end{equation}
We also note that
$\gcd(\pi_{1,3}(v))=\gcd(v)$ and
\begin{equation}\label{e:aboutPi}
\pi_{1,3}(k v)=k \pi_{1,3}(v)
\qquad\text{for every $v\in \mathbb Z^3$ and $k\in \mathbb Z$.}
\end{equation}

By \cref{l:kernel}, there is $v^0=(v^0_1,v^0_2,v^0_3)\in\mathbb Z^3$, with either $\gcd(v^0)=1$ or $v^0=(0,0,0)$, such that
\begin{equation*}
\left\{v\in\mathbb Z^3:s\in[v]\right\}=\left\{kv^0:k\in\mathbb Z\right\}.	
\end{equation*}
Without loss of generality, we may assume that $v^0_2\ge 0$ (otherwise, we can replace $v^0$ by $-v^0$).
Using~\eqref{e:twoCases}, \eqref{e:aboutTwoCases} and~\eqref{e:aboutPi}, it follows that
\begin{equation}\label{e:svsS}
\left\{v\in\mathbb Z^3:C\cup\{c_1,c_2,c_3\right\}\in[[v]]\}=\left\{kv^0:k\in\mathbb Z\right\}\cup\left\{k\pi_{1,3}(v^0):k\in\mathbb Z\right\}.
\end{equation}
We immediately see that if $v^0=(0,0,0)$, then
\begin{equation*}
\left\{v\in\mathbb Z^3:C\cup\{c_1,c_2,c_3\}\in[[v]]\right\}=\left\{(0,0,0))\right\},
\end{equation*}
and so $C\cup\{c_1,c_2,c_3\}\in\mathbb O$.
So suppose that $v^0\neq(0,0,0)$. Then, by \cref{l:charNonempty}, it holds $v^0_1v^0_2v^0_3\neq 0$, $|v^0_2|=|v^0_1|+|v^0_3|$ and $v^0$ is balanced (then also $\pi_{1,3}(v^0)$ is balanced). Also, at least one of the vectors $v^0,\pi_{1,3}(v^0)$, satisfies that its first coordinate is less than or equal to its third coordinate. Let $v'=(v'_1,v'_2,v'_3)$ be such a vector.
Then $v'\in\mathbb Z^3$ is a balanced vector such that $v'_2\ge 1$, $v'_1\le v'_3\le -1$, $\gcd(v')=1$ and $C\cup\{c_1,c_2,c_3\}\in[[v']]$.

We have shown that every set $C\cup\{c_1,c_2,c_3\}$, as in the statement of the theorem, belongs to (at least) one of the sets \eqref{eq:firstline}, \eqref{eq:secondline}.
Next, we show that each of the sets~\eqref{eq:firstline}, \eqref{eq:secondline} is non-empty.
For~\eqref{eq:firstline}, this follows from \cref{l:charNonempty}.
To prove that $\mathbb O\neq\emptyset$, let $K=\{(x,y)\in\mathbb R^2:x^2+y^2=1\}$ be the unit circle in the plane.

For every $v\in\mathbb Z^3\setminus\{(0,0,0)\}$, we know by \cref{l:only1choice} that there is at most one $a_v\in(0,1)$ such that $v$ is a cycle for $(K,((-\frac12,0),(0,0),(a_v,0)))$.
If there is no such $a_v$ then we define $a_v:=\frac 12$ (or we can choose any other value from the interval $(0,1)$).
Now we fix some
\begin{equation*}
a\in(0,1)\setminus\left\{a_v:v\in\mathbb Z^3\setminus\{(0,0,0)\}
\right\}.	
\end{equation*}
Then no $v\in\mathbb Z^3\setminus\{(0,0,0)\}$ is a cycle for $s:=(K,((-\frac12,0),(0,0),(a,0)))$.
Hence $\set(s)\in \mathbb O$ by the definition of $\mathbb O$, \eqref{eq:secondline},
so $\mathbb O\neq\emptyset$.

\smallbreak
Now we show that the sets \eqref{eq:firstline}, \eqref{eq:secondline} are pairwise disjoint (and, consequently, each of them is listed just once as they are non-empty).

If $S\in\mathbb O$, then there is $s=(C,(c_1,c_2,c_3))\in\mathcal K_3$ such that $(0,0,0)$ is the only cycle for $s$.
Then $s':=(C,(c_3,c_2,c_1))$ is the only element of $\mathcal K_3$, distinct from $s$, with $\set(s')=S$.
Obviously, $(0,0,0)$ is the only cycle also for $s'$. 
Now if $S\in[[v]]$ (where $v$ is as in \eqref{eq:firstline})
then there is $s\in\mathcal K_3$ with $\set(s)=S$ such that $s\in[v]$.
So $\mathbb O$ is disjoint from each of the sets $[[v]]$ in \eqref{eq:firstline}.

Now suppose that $[[v^1]],[[v^2]]$ are as in \eqref{eq:firstline}
(then, in particular, $\gcd(v^1)=1=\gcd(v^2)$)
and that some set $C\cup\{c_1,c_2,c_3\}$, as in the statement of the theorem, belongs to both $[[v^1]],[[v^2]]$.
By what we already proved, we know that there is some $v^0=(v^0_1,v^0_2,v^0_3)\in\mathbb Z^3$
satisfying~\eqref{e:svsS}.
By~\eqref{e:svsS}, each of the vectors $v^1,v^2$ is either an integer multiple of $v^0$, or an integer multiple of $\pi_{1,3}(v^0)$.
As $\gcd(v^1)=1$, %
this implies that $v^1 = \pm v^0$ or $v^1 = \pm \pi_{1,3}(v^0)$ and likewise for $v^2$.
Hence $v^1 = \pm v^2$ or $v^1 = \pm \pi_{1,3}(v^2)$. 
From the inequalities of~\eqref{eq:firstline} we then get $v^1=v^2$.
This completes the proof of the disjointness.

\smallbreak
It remains to show that each set of the form~\eqref{eq:firstline} or~\eqref{eq:secondline} is a betweenness isomorphism class of the collection of all sets $C\cup\{c_1,c_2,c_3\}$ as in the statement of the theorem.

\smallbreak
Let $S,R$ be sets of the form $S=C\cup\{c_1,c_2,c_3\}$ and $R=D\cup\{d_1,d_2,d_3\}$, where $C,D$ are circles in the plane, $c_1,c_2,c_3$ are pairwise distinct collinear points from the interior of $C$, and $d_1,d_2,d_3$ are pairwise distinct collinear points from the interior of $D$.
To complete the proof, we must show that the sets $S,R$ are betweenness isomorphic if and only if they belong to the same set from the partition given by~\eqref{eq:firstline} and~\eqref{eq:secondline}.

Suppose first that $S,R$ are betweenness isomorphic.
We will show that if $S\in[[v]]$ for some $v$ (as in~\eqref{eq:firstline}) then $R\in[[v]]$.
Similarly, one could prove that if $R\in[[v]]$ then $S\in[[v]]$.
It will follow that $R,S$ belong to the same sets of the form~\eqref{eq:firstline}.
And as the sets~\eqref{eq:firstline}, \eqref{eq:secondline} form a partition, $S\in\mathbb O$ if and only if $R\in\mathbb O$.
So suppose that $S\in[[v]]$ for some $v$ as in~\eqref{eq:firstline}.
Then there is $s\in\mathcal K_3$ with $\set(s)=S$ such that $s\in[v]$.
By rearranging the order in which the points $c_1,c_2,c_3$ are indexed, we may assume that $s=(C,(c_1,c_2,c_3))$.
Let $f$ be a betweenness isomorphism from $S$ to $R$.
By rearranging the order in which the points $d_1,d_2,d_3$ are indexed, we may assume that the permutation $\sigma\colon\{1,2,3\}\to\{1,2,3\}$ which corresponds to $f$ is the identity permutation (that is, $f$ maps $c_i$ to $d_i$, $i\in\{1,2,3\}$).
Then, by \cref{c:characterization} and \cref{r:Afterchara},  there is a bijection $h\colon\mathcal O_C\to\mathcal O_D$ such that, for every $\mathbf O\in\mathcal O_C$, there are $x_{\mathbf O}\in\mathbf O$ and $y_{\mathbf O}\in h(\mathbf O)$ with
\begin{equation}\label{e:equationForStabilizers}
\stab(y_{\mathbf O})=\stab(x_{\mathbf O}).
\end{equation}
As $s\in[v]$, there are $g\in G_3$ and $c\in C\setminus\chS(\{c_1,c_2,c_3\})$ such that $N(g)=v$ and $g\in\stab(c)$.
Fix some $\orb\in\mathcal O_C$.
By \cref{l:cyclePoint}, it holds
\begin{equation*}
g\in\stab(x_{\orb})\stackrel{\eqref{e:equationForStabilizers}}{=}\stab(y_{\orb}).
\end{equation*} 
So, $g$ and $y_\orb$ witness that
\begin{equation*}
(D,(d_1,d_2,d_3))\in[v].	
\end{equation*}
In particular, $R\in[[v]]$, as we wanted.

\smallbreak
Now suppose that $S,R$ belong to the same set from the partition given by~\eqref{eq:firstline} and~\eqref{eq:secondline}.
We first deal with the case that $S,R\in[[v]]$ for some $v$ as in~\eqref{eq:firstline}.
Then there are $s,r\in\mathcal K_3$ such that $\set(s)=S$, $\set(r)=R$ and $s,r\in[v]$.
By rearranging the order in which the points $c_1,c_2,c_3$, resp. $d_1,d_2,d_3$, are indexed, we may assume that
\begin{equation*}
s=(C,(c_1,c_2,c_3))\quad\text{and}\quad r=(D,(d_1,d_2,d_3)).	
\end{equation*}
By \cref{l:kernel}, there are $v^s,v^r\in\mathbb Z^3$ such that
\begin{equation*}
\left\{u\in\mathbb Z^3:s\in[u]\right\}=\left\{kv^s:k\in\mathbb Z\right\}
\end{equation*}
and
\begin{equation*}
\left\{u\in\mathbb Z^3:r\in[u]\right\}=\left\{kv^r:k\in\mathbb Z\right\}.
\end{equation*}
So there are $k_s,k_r\in\mathbb Z$ such that
\begin{equation}\label{e:k_sv^s}
k_sv^s=v=k_rv^r.
\end{equation}
In particular, the vectors $v^s,v^r$ are linearly dependent.
\cref{l:kernel} also states that either $\gcd(v^s)=1$, or $v^s=(0,0,0)$. The latter case cannot happen by~\eqref{e:k_sv^s} as $v\neq(0,0,0)$, so $\gcd(v^s)=1$. Similarly, $\gcd(v^r)=1$.
Now, \eqref{e:k_sv^s} easily implies that $k_s=\pm k_r$ and, consequently, $v^s=\pm v^r$.
So it holds
\begin{equation}\label{e:rovnostCyklu}
\left\{u\in\mathbb Z^3:s\in[u]\right\}=\left\{u\in\mathbb Z^3:r\in[u]\right\}.
\end{equation}

We claim that, whenever $c\in C\setminus\chS(\{c_1,c_2,c_3\})$ and $d\in D\setminus\chR(\{d_1,d_2,d_3\})$, we have
\[\stab(c)=\stab(d).\]
By symmetry, it is enough to show only one inclusion.
So fix $c,d$ as above and suppose that $g\in G_3$ belongs to $\stab(c)$.
Then $N(g)$ is a cycle for $s$.
By \cref{e:rovnostCyklu}, $N(g)$ is also a cycle for $r$.
Now \cref{l:cyclePoint} gives us that $g\in\stab(d)$.
So our claim is verified.

Observe that $c_2$ lies on the line segment connecting $c_1$ and $c_3$ (as $s=(C,(c_1,c_2,c_3))\in\mathcal K_3$), similarly for $d_2$.
Hence, there is a betweenness isomorphism $\tilde f\colon\chS(\{c_1,c_2,c_3\})\to \chR(\{d_1,d_2,d_3\})$ which maps $c_i$ to $d_i$ for every $i\in\{1,2,3\}$.
Then the permutation $\sigma\colon\{1,2,3\}\to\{1,2,3\}$ which corresponds to $\tilde f$ is the identity permutation.

Note that each orbit from $\mathcal O_C$, as well as each orbit from $\mathcal O_D$, is at most countable. So there are continuum many orbits from $\mathcal O_C$, as well as from $\mathcal O_D$.
So we can fix some bijection $h\colon\mathcal O_C\to\mathcal O_D$.
By the preceding claim, for every $\mathbf O\in\mathcal O_C$, there are $x_{\mathbf O}\in\mathbf O$ and $y_{\mathbf O}\in h(\mathbf O)$ such that
\begin{equation*}
\stab(y_{\mathbf O})=\stab(x_{\mathbf O}).
\end{equation*}
So we can apply \cref{c:characterization} to conclude that $S,R$ are betweenness isomorphic.

Finally, suppose that $S,R\in\mathbb O$.
Then there are $s,r\in\mathcal K_3$ such that 
$\set (s)=S$, $\set (r)=R$ and
$(0,0,0)$ is the only cycle for $s$, as well as the only cycle for $r$.
By the definition of a cycle and by \cref{l:cyclePoint}, for every $c\in C\setminus\chu_S(\{c_1,c_2,c_3\})$ and every $d\in D\setminus\chu_R(\{d_1,d_2,d_3\})$, we have
\begin{equation*}
\stab(c)=\left\{g\in G_3:N(g)=(0,0,0)\right\}=\stab(d).
\end{equation*}
Hence, we can apply \cref{c:characterization} in the same way as above to conclude that $S$ and $R$ are betweenness isomorphic.
\end{proof}

\bibliographystyle{alpha}
\bibliography{DKM-bibliography}

\end{document}